\documentclass[oneside,10pt]{article}          
\usepackage{amsfonts,amsmath,latexsym,amssymb,amsthm} 
\usepackage{mathrsfs,upref}         
\usepackage{hyperref}
\usepackage{graphicx}
\usepackage [ all ]{xy}       	            
                	   
\newtheorem{theorem}{Theorem}           
\newtheorem{lemma}{Lemma}               
\newtheorem{corollary}{Corollary}

\newtheorem{prop}{Proposition}

\newtheorem{notation}{Notation}

\theoremstyle{definition}

\newtheorem{remark}{Remark}

\begin{document}

\title
{On a relationship between orthogonal projections and Toeplitz operators on poly-Bergman spaces of the upper half-plane: vertical symbols.}


\author{Maribel Loaiza, Miguel Antonio Morales-Ramos, \\
	 María del Rosario Ramírez-Mora and Josué Ramírez-Ortega}

%
%
%
%
%


\date{April 29, 2026}                               




\maketitle

\begin{abstract}
	In the context of studying $C^*$-algebras generated by Toeplitz operators acting on the poly-Bergman space $\mathcal{A}^2_{n}(\Pi)$  of the upper half-plane $\Pi$,
we introduce a system of all-but-one orthogonal projections in generic position.
We show that the $C^*$-algebra generated by these orthoprojections
is closely related to the $C^*$-algebra generated by all Toeplitz operators with vertical symbols satisfying 
boundary conditions.
This result suggests a new approach in the study of Toeplitz operators acting on other reproducing kernel Hilbert spaces.
Furthermore, the range of one of the orthoprojections herein has a reproducing kernel
expressed in terms of  the digamma and the Nielsen´s beta
functions. The harmonic function also emerges in  this development.
\end{abstract}




\section{Introduction}\label{sec1}

Although the study of algebras generated by orthogonal projections and those generated by Toeplitz operators have both received attention in the literature, they have largely been developed independently. Based on recent descriptions of $C^*$-algebras generated by Toeplitz operators, as well as on studies concerning orthogonal projections, we have identified a profound connection between these topics. The approach adopted in this work represents a fertile ground that deserves further exploration, given its potential to enhance the understanding of Toeplitz operators acting on various reproducing kernel Hilbert spaces with a wide range of symbols.

In this paper we focus on the poly-Bergman spaces $\mathcal{A}_{n}^2(\Pi)$ of the upper half-plane 
\[
\Pi = \{ z \in \mathbb{C} \ : \ \operatorname{Im} z > 0 \},
\]
and investigate a close connection between $C^*$-algebras generated by Toeplitz operators with vertical symbols and $C^*$-algebras generated by certain systems of all-but-one orthogonal projections in generic position. Let \(L_\infty^{\{0,\infty\}}(\mathbb{R}_+)\) denote the set of bounded measurable functions on \(\mathbb{R}_+\) that admit limits at 0 and at \(+\infty\), extended to \(\Pi\) as functions depending on the imaginary part of the complex variable. The \(C^*\)-algebra \(C^*(\mathcal{T}_n^{0,\infty})\), generated by Toeplitz operators with symbols in \(L_\infty^{\{0,\infty\}}(\mathbb{R}_+)\) acting on \(\mathcal{A}_{n}^2(\Pi)\), was characterized in \cite{J.R}. The subalgebra $\mathcal{C}^*(\mathcal{T}_n^{0,0})$ arises by restricting to symbols whose limits at \(0\) and \(+\infty\) coincide. It follows from \cite{J.R} that 
\[
\mathcal{C}^*(\mathcal{T}_n^{0,0}) \cong \mathcal{D}_n^{\mathbb{C}I},
\]
where $\mathcal{D}_n^{\mathbb{C}I}$ consists of all continuous \(M_n(\mathbb{C})\)-valued functions on \([0, \infty]\) whose values at \(0\) and \(+\infty\) reduce to scalar multiples of the identity matrix.

On the other hand, $C^*$-algebras generated by orthogonal projections in generic position have been described in different works (see for example \cite{Halmos}, \cite{Vasilevski} and \cite{projection}). The study of $C^*$-algebras generated by systems of all-but-one orthogonal projections in generic position has revealed structures that are remarkably close to $\mathcal{C}^*(\mathcal{T}_n^{0,0})$. Motivated by this observation, we consider the $C^*$-algebra 
\[
\mathcal{C}^*(P,\{B_{\Pi,(j)}\}_{j=1}^n),
\]
generated by the orthogonal projections $P$, $B_{\Pi,{(1)}}$,$\dots$, $B_{\Pi,{(n)}}$, where 
$B_{\Pi,{(j)}}$ denotes the orthogonal projection from $\mathcal{A}^2_{n}(\Pi)$ onto the 
true-$j$-poly-Bergman space $\mathcal{A}^2_{(j)}(\Pi)$, and $P$ is constructed  in terms of Toeplitz operators acting on the Bergman space of $\Pi$. Although there exist techniques to describe $C^*$-algebras generated by a system of all but one orthogonal projections in generic position (see, for example, \cite{projection}), in this work we use a noncommutative Stone-Weierstrass Theorem instead. As a result, we prove that the algebra \(\mathcal{C}^*(P,\{B_{\Pi,(j)}\}_{j=1}^n)\) is isometrically isomorphic to the algebra \(\mathcal{D}_n^{1,n}\), consisting of all \(M \in M_n(\mathbb{C}) \otimes C[0,\infty]\) such that \(M(0)\) and \(M(\infty)\) are diagonal matrices, and for all \(j = 2, \ldots, n-1\), the diagonal entries satisfy \(M_{jj}(0) = M_{jj}(\infty)\).

One of the main results of this work consists in the comparison between the algebra 
$\mathcal{C}^*(\mathcal{T}_n^{0,0})$ and the algebra 
$\mathcal{C}^*(P,\{B_{\Pi,(j)}\}_{j=1}^n)$. To better understand the behavior of $C^*$-algebras generated by Toeplitz operators, we study the  $C^*$-algebra generated by a single Toeplitz operator $T_{n,a_0}$ and the orthogonal projections $B_{\Pi,(1)},\dots,B_{\Pi,(n)}$. We show that $\mathcal{C}^*(T_{n,a_0}, B_{\Pi,(1)}, \dots, B_{\Pi,(n)})$
is isomorphic and isometric to the algebra of continuous functions on the positive real line with values in \(M_n(\mathbb{C})\), subject to the condition that they are diagonal at the points \(x=0\) and \(x=\infty\). All the algebras mentioned above are related as follows
$$\mathcal{C}^*(\mathcal{T}_n^{0,0}) \subset \mathcal{C}^*(P,\{B_{\Pi,(j)}\}_{j=1}^n) \subset \mathcal{C}^*(T_{n,a_0}, B_{\Pi,(1)}, \dots, B_{\Pi,(n)}).$$
We also have 
$$\mathcal{C}^*(\mathcal{T}_n^{0,\infty}) \subset \mathcal{C}^*(T_{n,a_0}, B_{\Pi,(1)}, \dots, B_{\Pi,(n)}).$$
It is worth mentioning that, for $n=2$, we have $\mathcal{C}^*(P,\{B_{\Pi,{(j)}}\}_{j=1}^2)= \mathcal{C}^*(T_{n,a_0}, \{B_{\Pi,{(j)}}\}_{j=1}^2)$, that is, the $C^*$-algebra $\mathcal{C}^*(\mathcal{T}_n^{0,\infty})$ is contained in the algebra generated by the orthogonal projections $P, B_{\Pi,(1)}$ and $B_{\Pi,(2)}$. Finally, a section of this work is devoted to the description of the image and an integral representations of the projection $P$.




\section{On the Stone-Weierstrass theorem for noncommutative $C^*$-algebras}

Our primary goal is to put forward an alternative in the study of $C^*$-algebras generated by 
Toeplitz operators acting on polyanalytic function spaces, which lead us to the task of describe
$C^*$-algebras generated by continuous matrix-valued functions on a compact Hausdorff space, 
where a Stone-Weierstrass theorem for noncommutative $C^*$-algebras becomes a fundamental tool.

Let us recall some facts about the general theory of $C^*$-algebras.
For the time being, let $\mathcal{A}$ denote a unital $C^*$-algebra.
For each  irreducible unitary representation $\pi : \mathcal{A} \rightarrow B(\mathcal{H})$, 
there is a collection $P_{\pi}(\mathcal{A})$ of pure states given by the general formula
$f_{\pi,v}(a)=\langle \pi(a)v,v \rangle_{\mathcal{H}}$ with $\|v\|_{\mathcal{H}}=1$;
and if $f_{\pi,v}=f_{\pi,w}$, then there exists a unimodular complex number $\lambda$
such that $w=\lambda v$. 
Now any irreducible representation of $\mathcal{A}$, unitarily equivalent to
$\pi$, generates the same collection of pure states associated to $\pi$.
Besides, each pure state  of $\mathcal{A}$ has the form
$f_{\pi,v}(a)=\langle \pi(a)v,v \rangle_{\mathcal{H}}$  for some irreducible representation $\pi$.
Furthermore, if $\rho$ is an irreducible representation of $\mathcal{A}$, which is not unitarily equivalent 
to $\pi$, then $P_{\rho}(\mathcal{A}) \cap P_{\pi}(\mathcal{A})=\emptyset$.
All these facts are consequences  of the Gelfand-Naimark-Segal (GNS) construction for $C^*$-algebras, 
for example see Section 5.1 in \cite{Murphy}, and Sections 2.4 and 2.5 in \cite{Dixmier}.

\vspace{.3cm}
Pure states play a fundamental role in Stone-Weierstrass's theorems for noncommutative $C^*$-algebras.
Suppose that $\mathcal{B}$ is a $C^*$-subalgebra of $\mathcal{A}$.
In the case of CCR (liminal) $C^*$algebras, I. Kaplansky proved that $\mathcal{B}=\mathcal{A}$ if the intersection $\mathcal{B}\cap (J\setminus K)$ is nonempty  for any two regular maximal right ideals $J$ and $K$ of $\mathcal{A}$ (see Theorem 7.2 in \cite{Kapla}). This statement holds for regular maximal left ideals.
Now the  map $\varphi \mapsto N_{\varphi}=\{ a\in \mathcal{A} \ : \  \varphi(a^*a)=0 \}$
is a one-to-one- correspondence from the set of pure states of $\mathcal{A}$
onto the set of regular maximal left ideals of $\mathcal{A}$ (Theorem 5.3.5 in \cite{Murphy}).
Then Kaplansky's theorem says:  $\mathcal{B}=\mathcal{A}$ if for any distinct pure states $\varphi$
and $\psi$ of $\mathcal{A}$, there exists a {\bf positive} element $b\in \mathcal{B}$
such that $\varphi(b)=0$ and $\psi(b)\neq 0$.
Certainly Glimm's contributions on Stone-Weierstrass's theorems deserve an outstanding atention \cite{Glimm};
the author remarks that the statement``$\mathcal{B}$ separates the set pure states of $\mathcal{A}$ is  a sufficient condition for $\mathcal{A}=\mathcal{B}$" follows from Kaplansky's paper.  
In this work the following theorem fits to our porpuse, and we refer to it as Kaplansky's theorem,
where positiveness on $b\in \mathcal{B}$ is an unnecessary condition.

\begin{theorem}[\cite{Dixmier}, pp. 252]
	Let $\mathcal{A}$ be a unital postliminal $C^*$-algebra, 
	and $\mathcal{B}$ a $C^*$-subalgebra of $\mathcal{A}$.
	Suppose that $\mathcal{B}$ separates the set of pure states of $\mathcal{A}$, that is, 
	for each distinct pure states $\varphi$ and $\psi$ of $\mathcal{A}$,
	there exists $b\in \mathcal{B}$ such that $\varphi(b) \neq \psi(b)$.
	Then $\mathcal{A}=\mathcal{B}$.
\end{theorem}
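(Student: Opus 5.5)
The statement is the noncommutative Stone--Weierstrass theorem for postliminal algebras, quoted from Dixmier. I would derive it from Glimm's Stone--Weierstrass theorem, which is the deep input here and which I would not try to reprove. Glimm's theorem says: if $\mathcal{B}$ is a $C^*$-subalgebra of $\mathcal{A}$ that separates the pure states of $\mathcal{A}$ together with $0$, then $\mathcal{B}=\mathcal{A}$. The work therefore lies in reducing the postliminal, unital case to that statement, and in reconciling the two separation hypotheses.

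\textbf{Step 1: separation from $0$.} Since $\mathcal{A}$ is unital, I would first check that $1\in\mathcal{B}$, or else adjoin it. If $1\notin\mathcal{B}$, pass to $\mathcal{B}_1=\mathcal{B}+\mathbb{C}1$. This is still a $C^*$-subalgebra separating pure states, since it contains $\mathcal{B}$. Every pure state $\varphi$ satisfies $\varphi(1)=1\neq 0$, so $\mathcal{B}_1$ also separates the pure states from $0$. Glimm's theorem then gives $\mathcal{B}_1=\mathcal{A}$.

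It remains to exclude the possibility that $\mathcal{B}$ is a proper subalgebra of codimension one. If $\mathcal{B}\neq\mathcal{A}=\mathcal{B}+\mathbb{C}1$, then $\mathcal{B}$ is a closed two-sided ideal of codimension one. Its quotient map is a character $\chi$, which is a pure state with $\chi|_{\mathcal{B}}=0$. I would then look for a second pure state vanishing on $\mathcal{B}$, which would contradict separation. The candidate is any other pure state annihilating $\mathcal{B}$. But a codimension-one ideal has exactly one character vanishing on it, so this exact-quotient argument does not immediately produce a second one. This is the delicate point. Dixmier's hypothesis is really that $\mathcal{B}$ separates $P(\mathcal{A})\cup\{0\}$, or that $1\in\mathcal{B}$. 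In the paper's applications $\mathcal{B}$ always contains the identity, because the projections $B_{\Pi,(j)}$ sum to $I$, so Step 1 is harmless there. I would state this explicitly rather than rely on the codimension-one analysis.

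\textbf{Step 2: where postliminality enters.} Glimm's general theorem needs separation of the weak$^*$ closure of the pure states, not just of the pure states themselves. Postliminality is exactly what makes separation of pure states suffice. For postliminal (type I) algebras, two pure states with the same kernel of GNS representation are unitarily equivalent. Moreover, Kaplansky's result on regular maximal left ideals, via the correspondence $\varphi\mapsto N_\varphi$ recalled above, yields $\mathcal{B}=\mathcal{A}$ once $\mathcal{B}$ meets $J\setminus K$ for distinct maximal left ideals $J,K$.

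So the plan for Step 2 is as follows. Given distinct pure states $\varphi,\psi$ and $b\in\mathcal{B}$ with $\varphi(b)\neq\psi(b)$, set $c=b-\varphi(b)1$, which lies in $\mathcal{B}$ by Step 1. Then pass to the positive element $c^*c$. This step fails as stated, since $\varphi(c)=0$ does not imply $\varphi(c^*c)=0$. This is the main obstacle: turning scalar separation into membership in $N_\varphi\setminus N_\psi$. The remedy is Glimm's theorem itself, which handles exactly this by a transfinite composition series argument combined with Kadison transitivity. Hence I would cite Glimm (or Dixmier 11.1.8 together with 4.7.3) for the final implication rather than reprove it.
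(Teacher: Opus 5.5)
There is a genuine gap in Step~1, and it cannot be closed. The statement as quoted is false without an extra hypothesis, and the ``delicate point'' you isolate is exactly where a counterexample lives. Let $\mathcal{A}$ be unital and let $\mathcal{B}$ be any closed subspace with $\mathcal{B}+\mathbb{C}1=\mathcal{A}$. Then $\mathcal{B}$ separates \emph{all} states of $\mathcal{A}$: if $\varphi(a)\neq\psi(a)$ and $a=b+\lambda 1$ with $b\in\mathcal{B}$, then $\varphi(b)\neq\psi(b)$, because $\varphi(1)=\psi(1)=1$. So the kernel of any character of $\mathcal{A}$ is a proper $C^*$-subalgebra that separates the pure states.

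Concretely, take $\mathcal{A}=C[0,1]$, which is commutative and hence postliminal, and $\mathcal{B}=\{f\in C[0,1] : f(0)=0\}$. The single function $f(t)=t$ already separates all point evaluations, yet $\mathcal{B}\neq\mathcal{A}$. The paper's own ambient algebras also have characters, e.g.\ $M\mapsto M_{11}(0)$ on $\mathcal{D}_n^{1,n}$ and on $\mathcal{D}_n$, so this is not a vacuous worry.

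You say you would ``state this explicitly rather than rely on the codimension-one analysis''. Instead, you should conclude three things: the codimension-one case really occurs, no argument can rule it out, and the theorem must carry the hypothesis $1\in\mathcal{B}$ (or that $\mathcal{B}$ separates $P(\mathcal{A})\cup\{0\}$, the form found in Dixmier). Your observation that this hypothesis holds in every application in the paper is correct, since $Q_1+\cdots+Q_n=I$ belongs to $\mathcal{B}^{1,n}$ and to $\mathcal{B}_n$. With that hypothesis in place, your Step~1 becomes unnecessary.

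Step~2 does not supply an argument either.
\begin{itemize}
\item The passage from $\varphi(b)\neq\psi(b)$ to an element of $N_\varphi\setminus N_\psi$ fails, as you note yourself.
\item The Kaplansky criterion you invoke is, as the paper says, a result for liminal algebras, not postliminal ones.
\item The claim that postliminality is what lets separation of $P(\mathcal{A})$ replace separation of its weak$^*$-closure is left unjustified, so it amounts to restating the theorem.
\end{itemize}
In the end you cite Glimm/Dixmier for the whole implication. That is precisely what the paper does (it gives only the reference, no proof), so it is acceptable as a citation. However, the surrounding sketch should be dropped, and the cited statement should be the corrected one with $1\in\mathcal{B}$.
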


In particular, take $\mathcal{A}=M_n(\mathbb{C}) \otimes C(X)=M_n(C(X))$, where $X$ is a compact
Hausdorff space. In this case, $\mathcal{A}$ is the $C^*$-algebra of all continuous 
sections of the $C^*$-bundle $E=M_n(\mathbb{C}) \times X$, where
$p:E\rightarrow X$ is the canonical projection onto $X$.
Note that $M_n(\mathbb{C})$ and $C(X)$ are nuclear $C^*$-algebras,
thus there exists only one $C^*$-norm on $\mathcal{A}$.
For each $x_0 \in X$, the evaluation map $\pi_{x_0} : \mathcal{A} \rightarrow M_n(\mathbb{C})$
given by $\pi_{x_0}(M)=M(x_0)$ is an irreducible representation because it is surjective.
Then for each unimodular vector $v\in \mathbb{C}^n$ we have the pure state
given by
\begin{equation}\label{gen-pure-states}
	f_{x_0,v}(M)=\langle M(x_0)v,v \rangle.
\end{equation}
Let us see how this pure state can be obtained from another approach.
The $C^*$-algebra $M_n(\mathbb{C})$ has only one irreducible representation,
which is the natural identification with $B(\mathbb{C}^n)$, then all of its pure states have the
form $f_v(A)=\langle Av,v  \rangle$ with $\|v\|=1$, see also \cite{Lee}.
On the other hand, the space of pure states of $C(X)$ consists of all multiplicative functionals,
that is, it consists of all evaluation maps given by $F_{x_0}(f)=f(x_0)$  $\forall  f\in C(X)$.
It is easy to see that $f_v \otimes F_{x_0}=f_{x_0,v}$. Since $C(X)$ is abelian, every pure state
of $\mathcal{A}$ has the form $f_v \otimes F_{x_0}$, see Theorem 4.14 in \cite{Takesaki} pp. 211, and Theorem 6.4.13 in \cite{Murphy} pp. 204; according to the uniqueness up to unitary equivalence of the  GNS construction, the set of all $\pi_{x_0}$'s
exhausts the space of irreducible representations of $\mathcal{A}$.
Finally, by Urysohn's lemma, $\pi_{x_1}$ is not equivalent to $\pi_{x_0}$ whenever $x_1\neq x_0$.




\section{Bergman and poly-Bergman spaces}\label{sect-poly-bergman}

In this section we include some interesting results concerning poly-Bergman spaces of functions defined on the upper half-plane. To simplify notation, the identity operator is denoted by $I$  regardless of the space on which it acts. 
As usual, $\mathbb{C}$ denotes the complex plane with the area measure $dxdy$, and $\Pi $ stands for the upper half-plane. That is,
$$\Pi=\{z=x+iy\in \mathbb{C}:y>0\}.$$

A continuous function $f(z)=u(x,y)+iv(x,y):\Pi\to \mathbb{C}$ having partial derivatives up to order $n$
is called polyanalytic  of order $n$ if it belongs to  the kernel of the generalized Wirtinger  operator $(\partial / \partial \overline{z})^n $. Each polyanalytic function  $f$ of order $n$ can be written as
\begin{equation*}
	f(z)=\sum_{k=0}^{n-1}a_k(z)\overline{z}^k,
\end{equation*}
where $a_k$ is an analytic function for every $k=0,\dots, n-1$, see \cite{Balk}. 

The poly-Bergman space of order $n\neq 0$, denoted here by $\mathcal{A}^2_{n}(\Pi)$, is the closed subspace of $L_2(\Pi)$ consisting of all polyanalytic functions of order $n$. We denote by $\mathcal{A}^2_{0}(\Pi)$ the space consisting only of the zero function. It is clear that $\mathcal{A}^2_{0}(\Pi)\subset \mathcal{A}^2_{1}(\Pi)\subset \dots \subset \mathcal{A}^2_{n}(\Pi)$, then we consider the true-poly-Bergman spaces which are defined as follows:
$$
\mathcal{A}_{(n)}^2(\Pi)= \mathcal{A}_n^2(\Pi) \ominus \mathcal{A}_{n-1}^2(\Pi), \text{ for }n\neq 0.$$

Let $B_{\Pi,n}$ and $B_{\Pi,(n)}$ be the
orthogonal projections from $L_2(\Pi)$ 
onto $\mathcal{A}_n^2(\Pi)$ and
$\mathcal{A}_{(n)}^2(\Pi)$, 
respectively.

Now we include a unitary operator mapping poly-Bergman 
spaces into spaces of functions defined on the positive real line,  see \cite{PTB} for details.  There, the author considers the following unitary operators acting on 
$L_2(\Pi)=L_2(\mathbb{R}) \otimes L_2(\mathbb{R}_+)$:
$$U_1=F \otimes I,$$
$$(U_2\varphi)(x,y)= \frac{1}{\sqrt{2|x|}}\varphi\left(x,\frac{y}{2|x|}\right),$$
where $F$ is the Fourier transform on $L_2(\mathbb{R})$ given by 
$$F (f)(t)=\frac{1}{\sqrt{2\pi}}\int_{-\infty}^{\infty} f(x) e^{-itx} dx.$$
The unitary operator $U=U_2U_1$ transforms the poly-Bergman space of order $n$ onto the set of  all functions of the form 
\begin{equation*}
	\label{decomposition2}
	\sum_{k=0}^{n-1}\chi_+(x)f_k(x)\ell_k(y),\end{equation*}
where $\chi_+$ is the indicator function of $\mathbb{R}_+$,  $f_k$ belongs to $L_2(\mathbb{R})$, and  $\ell_k$ is the normalized Laguerre function
\begin{equation}\label{elek}
	\ell_{k}(y)= (-1)^k \chi_+(y) e^{-y/2}\frac{e^y}{k!}\frac{d^k}{dy^k}(e^{-y}y^{k}), \, k=0,1,2,\dots .
\end{equation}

For $k=0,1, \dots ,$ let $L_k$ denote the one-dimensional space generated by $\ell_k$, and consider the direct sum
$$L_n^\oplus= \bigoplus_{k=0}^n L_k.$$

\begin{theorem}
	\label{Relation}
	The unitary operator $U=U_2U_1$ gives an isometric isomorphism
	of the space $L_2(\Pi)$, under which
	\begin{enumerate}
		\item[1)]  the true-poly-Bergman space $\mathcal{A}_{(n)}^2(\Pi)$  
		is mapped onto the space
		$L_2(\mathbb{R}_+) \otimes L_{n-1}$,
		
		\item[2)] the poly-Bergman space $\mathcal{A}_{n}^2(\Pi)$ 
		is mapped 
		onto the space
		$L_2(\mathbb{R}_+) \otimes L_{n-1}^\oplus$.
	\end{enumerate}
\end{theorem}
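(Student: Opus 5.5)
Unitarity is immediate: $U_1=F\otimes I$ is unitary by Plancherel. $U_2$ is a measure-preserving change of variables $y\mapsto y/(2|x|)$ in the second variable, weighted by the Jacobian factor $(2|x|)^{-1/2}$. Its inverse is $(U_2^{-1}\psi)(x,y)=\sqrt{2|x|}\,\psi(x,2|x|y)$, and a direct substitution shows $\|U_2\varphi\|=\|\varphi\|$. The set $\{x=0\}$ is null, so nothing breaks there.

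The core of the plan is to characterize the images through the differential equations. First I show that $U_1$ turns $\partial/\partial\bar z=\tfrac12(\partial_x+i\partial_y)$ into $\tfrac12(ix+i\partial_y)$, up to the sign convention of $F$. Hence $(\partial_{\bar z})^n f=0$ becomes $(\partial_y+x)^n\hat f(x,\cdot)=0$ in $y$ for a.e. fixed $x$. The general solution of this ODE is $\hat f(x,y)=p_x(y)e^{-xy}$ with $p_x$ a polynomial of degree $<n$. Square integrability on $\mathbb{R}_+$ forces $p_x=0$ unless $x>0$, which produces the factor $\chi_+(x)$.

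Next, for $x>0$, I apply $U_2$. The rescaling $y\mapsto y/(2x)$ maps $e^{-xy}$ to $e^{-y/2}$ and maps polynomials of degree $<n$ in $y$ to polynomials of degree $<n$. So for each $x>0$ the fiber is $\{q(y)e^{-y/2}:\deg q<n\}$, and this is exactly $\operatorname{span}\{\ell_0,\dots,\ell_{n-1}\}=L^\oplus_{n-1}$. That $\ell_k$ is $e^{-y/2}$ times a degree-$k$ polynomial follows from formula (\ref{elek}), since it is the Rodrigues formula. Collecting the coefficients gives $U(\mathcal{A}^2_n(\Pi))\subseteq L_2(\mathbb{R}_+)\otimes L^\oplus_{n-1}$. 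This inclusion requires a measurability and $L_2$ argument. Write $U f=\sum_k g_k(x)\ell_k(y)$ with $g_k(x)=\langle Uf(x,\cdot),\ell_k\rangle$. Each $g_k$ is measurable, and Fubini together with orthonormality of the $\ell_k$ gives $g_k\in L_2(\mathbb{R}_+)$.

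For the reverse inclusion, I take $g\in L_2(\mathbb{R}_+)$, first with compact support in $(0,\infty)$. I then check that $U^{-1}(g\otimes\ell_k)$ is smooth and satisfies $(\partial_{\bar z})^{k+1}=0$. For this I expect to write it explicitly as $\frac{1}{\sqrt{2\pi}}\int_0^\infty g(\xi)\sqrt{2\xi}\,\ell_k(2\xi y)e^{ix\xi}\,d\xi$ and differentiate under the integral. This explicit inversion is the main obstacle: the distributional equation has to be upgraded to genuine polyanalyticity. To do so, I would use that $(\partial_{\bar z})^n$ applied to the integral form vanishes pointwise, because the integrand solves the ODE. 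Density and closedness of $\mathcal{A}^2_n(\Pi)$ then finish the argument, using that $L_2$-limits of polyanalytic functions of order $n$ remain polyanalytic, since the condition is an elliptic equation and convergence is locally uniform. This proves 2).

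For 1), I use that $U$ is unitary and that $\ell_{n-1}\perp L^\oplus_{n-2}$. Then
$$U(\mathcal{A}^2_n\ominus\mathcal{A}^2_{n-1})=(L_2(\mathbb{R}_+)\otimes L^\oplus_{n-1})\ominus(L_2(\mathbb{R}_+)\otimes L^\oplus_{n-2})=L_2(\mathbb{R}_+)\otimes L_{n-1}.$$
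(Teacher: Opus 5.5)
Your proposal is correct and follows the standard argument behind this result. The paper gives no proof of its own and only refers to \cite{PTB}, which proceeds the same way: conjugate $\partial_{\bar z}$ by $F\otimes I$ into $\frac{i}{2}(x+\partial_y)$, solve $(\partial_y+x)^n u=0$ fiberwise, rescale by $U_2$ to land in $\operatorname{span}\{y^k e^{-y/2}\}_{k<n}=L^\oplus_{n-1}$, and get 1) by orthogonal differences. One step needs writing out: $\partial_x f$ need not lie in $L_2(\Pi)$, so the passage from $(\partial_{\bar z})^n f=0$ to the fiberwise ODE must be done weakly (test against $\phi(x)\psi(y)$ and use Parseval in $x$), while your reverse inclusion via the explicit formula (exactly \eqref{Restrella}), density and closedness of $\mathcal{A}^2_n(\Pi)$ is sound.
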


By the last theorem, we can identify each true-poly-Bergman space $\mathcal{A}^2_{(n)}(\Pi)$ with $L_2(\mathbb{R}_+)$. To be precise, let $R_{0,(n)}: L_2(\mathbb{R}_+) \rightarrow L_2(\Pi)$ be the operator given by the rule
$$(R_{0,(n)} f)(x,y)= \chi_+(x)f(x)\ell_{n-1}(y).$$
According to Theorem \ref{Relation} $1)$, the operator
$$R_{(n)}:=R_{0,(n)}^{*}U$$
isometrically maps  $\mathcal{A}_{(n)}^2(\Pi)$ 
onto $L_2(\mathbb{R}_+)$. Further
\begin{align*}
	R_{(n)}^{ *}R_{(n)} &= B_{\Pi,(n)} :L_2(\Pi) \rightarrow \mathcal{A}_{(n)}^2(\Pi),\\
	R_{(n)} R_{(n)}^{*} &= I: L_2(\mathbb{R}_+)\rightarrow L_2(\mathbb{R}_+). 
\end{align*}

From now on, for a Hilbert space ${\cal H}$, let
$${\cal H}^n=\{h= (h_1,....,h_n)^t \ : \ h_j\in {\cal H} \}$$
with the usual inner product, and the superscript denotes the transpose matrix.

Introduce the  embedding $R_{0,n}:(L_2(\mathbb{R}_+))^n \rightarrow L_2(\Pi)$ given by the formula
\begin{align*}\label{def:Rn} 
	(R_{0,n} f)(x,y)
	&= \chi_+(x)[N_n(y)]^t f(x),
\end{align*}
where $f=(f_1,\dots,f_n)^t$ and $N_n(y)=(\ell_0(y),\dots,\ell_{n-1}(y))^t.$
 It is easy to prove (see \cite[pp.~1807--1808]{J.R}) that  $R_{0,n}^*:L_2(\Pi) \rightarrow (L_2(\mathbb{R}_+))^n$ 
is given by
\begin{equation}\label{R*n}
	(R_{0,n}^* \varphi)(x)= \chi_+(x) \int_{\mathbb{R}_+} \varphi(x,y)N_n(y)dy,   
\end{equation}
and that the operator 
\begin{equation}\label{mapA_Ln}
	R_n:= R_{0,n}^{*}U
\end{equation} 
isometrically maps $\mathcal{A}_n^2(\Pi)$ onto $(L_2(\mathbb{R}_+))^n$.  
Thus,
\begin{align}
	R_n^{*}R_n &= B_{\Pi,n} :L_2(\Pi) \rightarrow \mathcal{A}_n^2(\Pi),   \label{Rn*Rn}\\
	R_n R_n^{*}&=I:(L_2(\mathbb{R}_+))^n\rightarrow (L_2(\mathbb{R}_+))^n.   \label{RnRn*}
\end{align} 

Combining the results in {\cite[Corollary~2.5, Theorem~2.4]{PTB}} with 
the construction given in \cite[pp.~1807--1808]{J.R} we obtain
the explicit formulae for $R_n$ and $R_n^*$, which are used in the upcoming sections. They are given by 
\begin{equation}\label{Rn} 
	(R_{n} \varphi)(x)= \frac{\chi_+(x)}{\sqrt{2\pi}} \int_{\mathbb{R}} e^{-xti}
	\int_{\mathbb{R}_+} \varphi(t,r) \sqrt{2|x|} N_n(2|x|r) dr dt, 
	\quad \quad \varphi \in L_2(\Pi) \end{equation}
and
\begin{equation}
	\label{Restrella}(R_{n}^* f)(x,y)= \frac{1}{\sqrt{2\pi}} \int_{\mathbb{R}_+} e^{xti}
	\sqrt{2t} N_n(2ty)^t f(t) dt, 
	\quad  \quad  f \in (L_2(\mathbb{R}_+))^n.\end{equation}

Note that the integral in \eqref{R*n} as well as the integral in  \eqref{Rn} refer to the vector integral, which means that we are integrating each component individually.




\section{Toeplitz operators with vertical symbols}\label{sect-Toeplitz}

In \cite{SQV}, the authors characterized all commutative $C^*$-algebras generated by Toeplitz operators acting on the Bergman space of the unit disk $\mathbb{D}$, or alternatively, on the upper half-plane $\Pi$. Each of these algebras is generated by Toeplitz operators whose symbols are invariant under the action of a maximal abelian group of M\"obius transformations acting on $\Pi$. One of these groups is the group of real numbers $\mathbb{R}$ acting on the upper half-plane by horizontal translations. That is, each $h\in \mathbb{R}$ induces the M\"obius transformation
$$z\mapsto z+h.$$
Those symbols invariant under the above action of $\mathbb{R}$ are precisely all functions that  depend only on the imaginary component of the complex variable $z$. They are called vertical symbols and have been used in different settings to describe $C^*$-algebras generated by the corresponding Toeplitz operators (see \cite{J.R},\cite{J.R2} and \cite{vasi} for example).

In this work we use Toeplitz operators acting on different spaces. In order to prevent any confusion, below we explain the notation that we use throughout this work. 
For $n\in \mathbb{N}$ and a bounded measurable symbol $a$, we denote by $T_{(n),a}$ the Toeplitz operator with symbol $a$ acting on $ \mathcal{A}_{(n)}^2(\Pi)$ by the rule $T_{(n),a}(f)=B_{\Pi,(n)}(af)$.
Finally, $T_{n,a}$ stands for the Toeplitz operator acting on $\mathcal{A}_{n}^2(\Pi)$. 
It is well known that,  if $a$ is a bounded vertical symbol, the Toeplitz operator $T_{(n),a}$ is unitarily equivalent 
to the multiplication operator $\gamma^{(n),a}I=R_{(n)}T_{(n),a}R_{(n)}^{*}$
acting on $L_2(\mathbb{R}_+)$ (\cite[Theorem 3.2]{J.R}), where
\begin{equation}\label{gamma-(0)-a}
	\gamma^{(n),a}(x)
	=\int_{\mathbb{R}_+}a \left(\frac{y}{2|x|}\right) \left(\ell_{n-1}(y)\right)^2 dy.
\end{equation}
Analogously, the Toeplitz operator $T_{n,a}$ is unitarily equivalent to the matrix-multiplication operator $\gamma^{n,a}I=R_nT_{n,a}R_n^{*}:(L_2(\mathbb{R}_+))^n\to (L_2(\mathbb{R}_+))^n$ (\cite[Theorem 3.3]{J.R}), where
\begin{equation}\label{gamma-n-a}
	\gamma^{n,a}(x)
	=(\gamma^{n,a}_{jk}(x))_{j,k=1,\dots,n}=\int_{\mathbb{R}_+}a \left(\frac{y}{2|x|}\right) N_n(y)[N_n(y)]^t dy,
\end{equation} 
and $N_n(y)=(\ell_0(y),\dots, \ell_{n-1}(y))^t$.

In case $n=1$, we use the standard notation for a Toeplitz operator,  namely, $T_{(1),a}$ is denoted by $T_{a}$, and $\gamma^{(1),a}$  by $\gamma^{a}$ (see \cite[Theorem 5.2.1]{vasi}).

In order to get boundary conditions for the matrix-valued function given in \eqref{gamma-n-a}, in \cite{J.R} the authors considered a subclass of vertical symbols, denoted by $L_\infty^{\{0,\infty\}}(\mathbb{R}_+)$, and consisting of all functions in $L_\infty(\mathbb{R}_+)$ having limit values 
at the points $0$  and $+\infty$. That is,
for each $a \in  L_\infty^{\{0,\infty\}}(\mathbb{R}_+)$ the following 
limits exist
$$a^0:=\lim_{y\rightarrow 0^+}a(y), \quad  \quad 
a^{+\infty}:=\lim_{y \rightarrow +\infty}a(y).$$
Here every function $a \in  L_\infty^{\{0,\infty\}}(\mathbb{R}_+)$ is identified 
with its extension $a(z):=a(y)$ to the upper half-plane $\Pi$, where
$y= \mathrm{Im}\, z$. Elementary calculations show that
\begin{equation*}\label{Prop:Gamma}
	a^{+\infty} I= \lim_{x\rightarrow 0^+}\gamma^{n,a}(x), \quad
	a^{0} I=  \lim_{x\rightarrow +\infty}\gamma^{n,a}(x).
\end{equation*}

For a family $\mathcal{F}$ of bounded linear operators acting on a Hilbert space, 
the $C^*$-algebra generated by $\mathcal{F}$ is denoted by $\mathcal{C}^*(\mathcal{F})$.

\begin{theorem}
	\label{CAToeplitz1}
	Let $\mathcal{T}_n^{0,\infty}=\{T_{n,a}: a\in  L_\infty^{\{0,\infty\}}(\mathbb{R}_+)\}$. 
	The $C^*$-algebra $\mathcal{C}^*(\mathcal{T}_n^{0,\infty})$ is isomorphic and isometric to the 
	$C^*$-subalgebra $\mathcal{D}_n^{0,\infty}$ of 
	$M_n(\mathbb{C}) \otimes C[0,\infty]$ consisting of all matrix-valued functions $M$
	such that $M(0)$ and $M(\infty)$ belong to $\mathbb{C} I$. 
	The isomorphism from $\mathcal{C}^*(\mathcal{T}_n^{0,\infty})$ onto $\mathcal{D}_n^{0,\infty}$
	is given on the generators by the rule
	$$T_{n,a} \longmapsto \gamma^{n,a}.$$
\end{theorem}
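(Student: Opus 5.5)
By the unitary equivalence $T_{n,a}\mapsto R_nT_{n,a}R_n^*=\gamma^{n,a}I$, it suffices to show that the $C^*$-algebra generated by the matrix functions $\{\gamma^{n,a}: a\in L_\infty^{\{0,\infty\}}(\mathbb{R}_+)\}$, viewed inside $M_n(\mathbb{C})\otimes C[0,\infty]$, is exactly $\mathcal{D}_n^{0,\infty}$. The resulting map is then automatically isometric. The norm of the multiplication operator by a continuous matrix function on $(L_2(\mathbb{R}_+))^n$ equals its sup norm, because $\mathbb{R}_+$ is dense in $[0,\infty]$ and Lebesgue measure charges every open set.

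\textbf{Inclusion into $\mathcal{D}_n^{0,\infty}$.} I will first check that each $\gamma^{n,a}$ extends continuously to $[0,\infty]$. Continuity on $(0,\infty)$ follows from \eqref{gamma-n-a} by dominated convergence after the substitution $y=2xs$. The boundary values $a^{+\infty}I$ at $x=0$ and $a^0I$ at $x=\infty$ are the elementary limits stated before the theorem. Since $\mathcal{D}_n^{0,\infty}$ is a $C^*$-algebra, the generated algebra lies inside it.

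\textbf{Equality.} The algebra $\mathcal{D}_n^{0,\infty}$ is postliminal: it is a subalgebra of a type I algebra, and all its irreducible representations are finite dimensional. So I apply Kaplansky's theorem (Dixmier), with the subalgebra $\mathcal{B}$ generated by the $\gamma^{n,a}$, after adjoining the identity, which is $\gamma^{n,1}$. I first list the pure states of $\mathcal{D}_n^{0,\infty}$. For $x\in(0,\infty)$ they are $f_{x,v}(M)=\langle M(x)v,v\rangle$ with $\|v\|=1$. At the endpoints the evaluations $M\mapsto M(0)$ and $M\mapsto M(\infty)$ are scalar, giving the two characters $\delta_0$ and $\delta_\infty$. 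To justify that this list is complete, I will argue as in Section 2: an irreducible representation factors through evaluation at a point of $[0,\infty]$, and the image at each point is $M_n(\mathbb{C})$ or $\mathbb{C}$.

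I then show that $\mathcal{B}$ separates these pure states. The following steps are needed.

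(i) $\delta_0$ versus $\delta_\infty$: take $a$ with $a^0\neq a^{+\infty}$.

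(ii) $\delta_0$ or $\delta_\infty$ versus $f_{x,v}$: take a symbol $a$ supported in a compact subset of $(0,\infty)$, so that $\gamma^{n,a}(0)=\gamma^{n,a}(\infty)=0$. With $a\ge0$ nonzero, $\gamma^{n,a}(x)$ is positive definite, since the $\ell_k$ are linearly independent on every set of positive measure. Hence $\langle \gamma^{n,a}(x)v,v\rangle>0$.

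(iii) $f_{x,v}$ versus $f_{x',w}$ with $x\ne x'$: take $a=\chi_{[0,c]}$ with $c$ near $0$. Then $\gamma^{n,a}(x)$ is close to $0$ when $x$ is small and close to $I$ when $x$ is large. More precisely, use $a=\chi_{[0,c]}$ and exploit the dilation structure $\gamma^{n,a}(x)=G(2xc)$ with $G(t)=\int_0^t N_nN_n^t\,dy$. Then $\operatorname{tr}G(t)$ is strictly increasing in $t$, and $\operatorname{tr}$ can be realized by averaging over an orthonormal basis. This suggests a cleaner route: separating points of $(0,\infty)$ by the scalar-trace functions needs more than single pure states, so I instead rely on the following fact. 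The fiber-wise images $\mathcal{B}(x)$ are all of $M_n(\mathbb{C})$, and the point-evaluations $\pi_x,\pi_{x'}$ restricted to $\mathcal{B}$ are inequivalent. These two facts give separation of pure states: two pure states with different inequivalent GNS representations that are each irreducible on $\mathcal{B}$ are separated by $\mathcal{B}$. Inequivalence follows from the existence of $b\in\mathcal{B}$ with $\operatorname{tr}b(x)\neq\operatorname{tr}b(x')$, namely $b=\gamma^{n,\chi_{[0,1]}}$.

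(iv) $f_{x,v}$ versus $f_{x,w}$ at the same $x$: this requires $\mathcal{B}(x)=M_n(\mathbb{C})$.

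\textbf{Main obstacle.} The key step is proving $\mathcal{B}(x)=M_n(\mathbb{C})$ for each $x\in(0,\infty)$, i.e. that the matrices $\int_0^\infty a(y/2x)N_n(y)N_n(y)^t\,dy$ generate $M_n(\mathbb{C})$. Because these matrices are real symmetric, their commutant must be shown trivial. I would take $a=\chi_{[0,c]}$, so the family $\{G(t)\}_{t>0}$ lies in $\mathcal{B}(x)$, and hence so do the derivatives $N_n(t)N_n(t)^t$, obtained as limits of difference quotients. A matrix $C$ commuting with all rank-one projections onto the spans of $N_n(t)$ leaves each line $\mathbb{C}N_n(t)$ invariant. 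The vectors $N_n(t)$ for $t>0$ span $\mathbb{C}^n$, and any $n+1$ of them are in general position; this uses the polynomial structure of the Laguerre functions, whose Wronskian-type determinants do not vanish. Therefore $C$ is scalar, and by the double commutant theorem $\mathcal{B}(x)=M_n(\mathbb{C})$. With separation established, Kaplansky's theorem gives $\mathcal{B}=\mathcal{D}_n^{0,\infty}$.
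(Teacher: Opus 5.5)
Your proof is correct. The paper does not prove this theorem itself; it imports it from \cite{J.R}. The natural comparison is therefore with the paper's proofs of Theorems \ref{isPT} and \ref{C-Ta,B(j)}, which use the same scheme you use: Kaplansky's theorem together with an explicit list of the pure states.

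The real difference is at interior points. In Lemmas \ref{lem:sep-x0-x1-2} and \ref{lem:sep-x0-x1}, the paper separates $f_{x_0,v}$ from $f_{x_1,w}$ through coordinate identities for $f(Q_j\,\cdot\,Q_k)$. That presupposes the diagonal projections $Q_j$ lie in the algebra. Here they do not, since $Q_j(0)\notin\mathbb{C}I$. You replace that computation with two structural facts:
\begin{itemize}
\item fibrewise irreducibility $\mathcal{B}(x)=M_n(\mathbb{C})$, obtained from $G'(t)=N_n(t)N_n(t)^t$ and a commutant argument;
\item inequivalence of the restrictions $\pi_x|_{\mathcal{B}}$, detected by the strictly increasing function $\operatorname{tr}G(2x)$.
\end{itemize}
GNS uniqueness then yields the separation. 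This is the standard criterion for subalgebras of $C(X,M_n(\mathbb{C}))$. It is more conceptual and does not need the $Q_j$, at the cost of not exhibiting explicit separating elements.

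Three small tightenings.
\begin{itemize}
\item The determinant you need is Vandermonde-type, not Wronskian-type. Since $\ell_k(t)=e^{-t/2}p_k(t)$ with $\deg p_k=k$, $\det[N_n(t_1),\dots,N_n(t_n)]$ equals a nonzero constant times $\prod_i e^{-t_i/2}\prod_{i<j}(t_j-t_i)$. Alternatively, $\lambda(t)=\langle CN_n(t),N_n(t)\rangle/\|N_n(t)\|^2$ is continuous and takes at most $n$ values, so it is constant.
\item Your use of $I=\gamma^{n,1}\in\mathcal{B}$ is not cosmetic. Kaplansky's theorem, as quoted in the paper, really needs $\mathcal{B}$ to separate pure states from the zero functional as well. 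For example, $\mathbb{C}\oplus 0\subset\mathbb{C}^2$ separates the two pure states of $\mathbb{C}^2$ but is not all of $\mathbb{C}^2$.
\item The exploratory sentences opening step (iii), about $c$ near $0$ and scalar-trace functions, can be deleted. Only the irreducibility-plus-inequivalence argument is needed.
\end{itemize}
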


Denote by  $\mathcal{T}_n^{0,0}$ the set of all Toeplitz operators
$T_{n,b}$ with symbols $b \in L_\infty^{\{0,\infty\}}(\mathbb{R}_+)$ satisfying the condition $b^0=b^{+\infty}$.
The proof of the following corollary is similar to that of Theorem \ref{CAToeplitz1}.

\begin{corollary}\label{CAToeplitz} 
	The $C^*$-algebra $\mathcal{C}^*(\mathcal{T}_n^{0,0})$ is isomorphic and isometric to 
	the $C^*$-algebra $\mathcal{D}_n^{\mathbb{C}I}$, which consists of all
	matrix-valued functions $M\in M_n(\mathbb{C}) \otimes C[0,\infty]$ such that  $M(0)=M( \infty)$. 
\end{corollary}




\section{Toeplitz operators and  orthogonal projections}\label{sect-PGeneric}

In \cite{Douglas, Halmos, Pedersen, Vasilevski} and \cite{projection}, the authors delve into the description of $C^*$-algebras generated by orthogonal projections.
Building upon these works, we observed that certain $C^*$-algebras generated 
by Toeplitz operators acting on $\mathcal{A}_{n}^2(\Pi)$ can alternatively be studied through a system of all-but-one 
orthogonal projections in generic position. Following \cite{projection}, the orthogonal projections from 
$\mathcal{A}_{n}^2(\Pi)$ onto the true-polyBergman subspaces are natural candidates for our purpose; 
of course, one more orthogonal projection must be constructed.
We overcome this difficulty by means of an isomorphism from $\mathcal{A}_{n}^2(\Pi)$ onto $(\mathcal{A}^2(\Pi))^n$.
Thus, a system of all-but-one orthogonal projections in generic position acting on $(\mathcal{A}^2(\Pi))^n$
is constructed below, one of orthoprojections is given  in terms of Toeplitz operators acting on the Bergman space $\mathcal{A}^2(\Pi)$.
As a consequence,  the $C^*$-algebra generated by such orthoprojections  is related to a 
$C^*$-algebra generated by Toeplitz operators acting on $\mathcal{A}_{n}^2(\Pi)$, see Corollary \ref{CAToeplitz}.

According to  \cite{projection},  we say that
all-but-one orthogonal projections $P, P_1, \dots, P_n$ are in generic position if
$$(\text{Im}\, P)^{\perp} \cap \text{Im}\, P_j=\{0\}=\text{Im}\, P \cap (\text{Im}\, P_j)^{\perp},    \quad j=1,\dots,n.$$

The following theorem provides the general method (up to isomorphism) for constructing a system of all-but-one orthogonal projections in generic position.

\begin{theorem}
	\label{Vas-generic} 
	Let $L$ be a Hilbert space. Suppose that $C_k: L \longrightarrow L$ are injective positive operators for 
	$k=1,...,n$ which satisfy
	$C_jC_k=C_kC_j$ and
	$$\sum_{k=1}^nC_k=I.$$
	Then the set of operators
	\begin{itemize}
		\item[i)] $P:=(C_j^{\frac{1}{2}}C_k^{\frac{1}{2}})_{j,k=1,\dots,n}: L^n \longrightarrow L^n$,
		\item[ii)] $P_j:={\rm diag }(0,\dots,I,\dots,0): L^n \longrightarrow L^n, \quad  j=1, \dots, n,$
	\end{itemize}
	is a system all-but-one orthogonal projections in generic position.
\end{theorem}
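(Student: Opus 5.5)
The plan is to verify three things in turn: that each $P_j$ is an orthogonal projection, that $P$ is an orthogonal projection, and that the generic position conditions $(\text{Im}\,P)^\perp\cap \text{Im}\,P_j=\{0\}=\text{Im}\,P\cap(\text{Im}\,P_j)^\perp$ hold for every $j$. The first is immediate: $P_j$ is a diagonal operator matrix with a single identity entry, so $P_j=P_j^*=P_j^2$, and $\text{Im}\,P_j$ is the copy of $L$ in the $j$-th coordinate.

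For $P$, I will write it as $P=VV^*$, where $V:L\to L^n$ is $Vh=(C_1^{1/2}h,\dots,C_n^{1/2}h)^t$. Then $V^*(h_1,\dots,h_n)^t=\sum_k C_k^{1/2}h_k$, and $(VV^*)_{jk}=C_j^{1/2}C_k^{1/2}$, which is exactly the matrix in i). Moreover $V^*V=\sum_k C_k=I$, so $V$ is an isometry. Hence $P=VV^*$ is self-adjoint and $P^2=V(V^*V)V^*=P$. This also identifies $\text{Im}\,P=\text{Im}\,V$, which is closed because $V$ is an isometry, and $(\text{Im}\,P)^\perp=\ker V^*$. Commutativity of the $C_k$ is not needed for this step.

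For the generic position conditions I fix $j$.

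\emph{First condition.} Take $h=(0,\dots,g,\dots,0)^t\in\text{Im}\,P_j\cap(\text{Im}\,P)^\perp$, with $g$ in the $j$-th slot. Then $0=V^*h=C_j^{1/2}g$. Since $C_j$ is injective and positive, $C_j^{1/2}$ is injective: if $C_j^{1/2}g=0$ then $C_jg=0$, so $g=0$.

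\emph{Second condition.} Take $h\in\text{Im}\,P\cap(\text{Im}\,P_j)^\perp$. Then $h=Vg$ for some $g\in L$, and its $j$-th component vanishes, i.e.\ $C_j^{1/2}g=0$. Again $g=0$, so $h=0$.

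The whole argument is essentially bookkeeping. The only points needing care are recognizing the factorization $P=VV^*$, which gives idempotency and the description of the image without computing $P^2$ entrywise, and justifying that injectivity of $C_j$ passes to $C_j^{1/2}$. Commutativity of the $C_k$ is presumably assumed to ensure $C_j^{1/2}C_k^{1/2}=C_k^{1/2}C_j^{1/2}$, which makes the matrix symmetric in appearance; the self-adjointness of $P$, however, already follows from $P=VV^*$.
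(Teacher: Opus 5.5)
Your proof is correct and complete. The paper gives no proof of this theorem to compare against. It is quoted from the literature on projections in generic position (\cite{projection}) as the general model, up to isomorphism, of a system of all-but-one orthogonal projections in generic position.

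Your factorization $P=VV^*$, with the isometry $Vh=(C_1^{1/2}h,\dots,C_n^{1/2}h)^t$, is a natural way to supply the missing argument. The identity $V^*V=\sum_k C_k=I$ gives $P=P^*=P^2$ at once. It also identifies $\text{Im}\,P=\text{Im}\,V$ and $(\text{Im}\,P)^\perp=\ker V^*$. Both generic-position conditions then reduce to the injectivity of $C_j^{1/2}$, which you justify correctly.

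Two small remarks.

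First, you should also record that $P_jP_k=\delta_{jk}P_j$ and $\sum_{j=1}^n P_j=I$. This is the ``all-but-one'' structure the definition presupposes; the paper notes it for the $Q_{\Pi,(j)}$ right after the theorem. It is trivial for the diagonal projections, but it belongs in the verification.

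Second, your guess about why commutativity is assumed misses its actual role. As you observe, it is not needed for this statement at all. It matters for the structure theory: commuting $C_k$ can be realized simultaneously as multiplication operators, and that is what turns $\mathcal{C}^*(P,P_1,\dots,P_n)$ into an algebra of continuous matrix-valued functions. In the paper this is exactly what happens when the single transform $R_1$ converts every $T_{a_k}$ into $\gamma^{a_k}I$. Then $P_T$ becomes the matrix function $P_\gamma=\bigl(\sqrt{\gamma^{a_j}}\sqrt{\gamma^{a_k}}\bigr)_{j,k}$, and the Stone--Weierstrass argument can be run on it.
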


We take $L=\mathcal{A}^2(\Pi)$.
For $j=1,\dots,n$, let 
$$Q_{\Pi,(j)}: (\mathcal{A}^2(\Pi))^n \rightarrow (\mathcal{A}^2(\Pi))^n$$
be  the orthogonal projection given by
\begin{equation}\label{QPij}
	Q_{\Pi,(j)}f=(0,\dots,f_j,\dots,0)^t,
\end{equation}
where
$f=(f_1,\dots,f_n)^t \in (\mathcal{A}^2(\Pi))^n$. The orthogonal projections given above are quite natural and hold the following properties
\begin{align*}
	Q_{\Pi,(j)}Q_{\Pi,(k)}&=0 \ \text{ if } j\neq k,\\
	\sum_{j=1}^{n}Q_{\Pi,(j)}&=I.
\end{align*}

We  proceed to the construction of an orthoprojection
$P_T : (\mathcal{A}^2(\Pi))^n \rightarrow (\mathcal{A}^2(\Pi))^n$ according to  Theorem \ref{Vas-generic}. To do so,
introduce the Toeplitz operators $T_{a_k}: \mathcal{A}^2(\Pi) \rightarrow \mathcal{A}^2(\Pi)$ 
with vertical symbols 
\begin{equation}\label{vertical-aj}
	a_n(z)=\chi_{[n-1,\infty)}(y) \quad  \text{and} \quad a_k(z)=\chi_{[k-1,k)}(y) \ \text{for} \ k=1,\dots,n-1.
\end{equation}

Consider the operator  $P_T: \, (\mathcal{A}^2(\Pi))^n \rightarrow (\mathcal{A}^2(\Pi))^n$ defined by the formula
\begin{equation}\label{PT}
	P_T=\left( \sqrt{T_{a_j}} \sqrt{T_{a_k}}\right)_{j,k=1,\dots,n}.
\end{equation}

\begin{lemma}\label{cor:Vas-generic} 
	The set of operators $P_T, Q_{\Pi,(1)},\dots, Q_{\Pi,(n)}$ is a system of all-but-one orthogonal projections in generic position. 
\end{lemma}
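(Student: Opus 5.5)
The plan is to apply Theorem \ref{Vas-generic} with $L=\mathcal{A}^2(\Pi)$ and $C_k=T_{a_k}$, $k=1,\dots,n$. Then $P$ in Theorem \ref{Vas-generic} is exactly $P_T$ from \eqref{PT}, and $P_j$ is exactly $Q_{\Pi,(j)}$ from \eqref{QPij}. The lemma follows once I check the hypotheses: each $T_{a_k}$ is positive and injective, the $T_{a_k}$ commute pairwise, and $\sum_k T_{a_k}=I$.

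To verify these I will pass to the multiplication model with $n=1$. Conjugating by $R_{(1)}$ turns $T_{a_k}$ into multiplication by
\[
\gamma^{a_k}(x)=\int_{\mathbb{R}_+}a_k\!\left(\frac{y}{2x}\right)\ell_0(y)^2\,dy=\int_{\mathbb{R}_+}a_k\!\left(\frac{y}{2x}\right)e^{-y}\,dy
\]
on $L_2(\mathbb{R}_+)$.

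\textbf{Commutativity and sum.} Multiplication operators commute, so the $T_{a_k}$ commute. The intervals $[0,1),[1,2),\dots,[n-2,n-1),[n-1,\infty)$ partition $\mathbb{R}_+$, so $\sum_k a_k\equiv 1$. Linearity of $a\mapsto T_a$ then gives $\sum_k T_{a_k}=T_1=I$.

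\textbf{Positivity.} Since $a_k\ge 0$, we have $\langle T_{a_k}f,f\rangle=\int_\Pi a_k|f|^2\ge 0$. Equivalently, $\gamma^{a_k}\ge 0$.

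\textbf{Injectivity.} This is the only step needing computation. The operator of multiplication by $\gamma^{a_k}$ is injective on $L_2(\mathbb{R}_+)$ if and only if $\gamma^{a_k}(x)\neq 0$ for almost every $x>0$. Substituting $y=2xs$ gives
\[
\gamma^{a_k}(x)=2x\int_{\mathbb{R}_+}a_k(s)e^{-2xs}\,ds .
\]
For $k<n$ this equals $e^{-2x(k-1)}-e^{-2xk}$, and for $k=n$ it equals $e^{-2x(n-1)}$. Both are strictly positive for every $x>0$, since $a_k$ is the indicator of a set of positive measure and the integrand is positive there. Hence $\ker T_{a_k}=\{0\}$.

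Positivity also means $\sqrt{T_{a_k}}$ is well defined, namely unitarily equivalent to multiplication by $\sqrt{\gamma^{a_k}}$, so \eqref{PT} makes sense. With all hypotheses verified, Theorem \ref{Vas-generic} yields that $P_T,Q_{\Pi,(1)},\dots,Q_{\Pi,(n)}$ is a system of all-but-one orthogonal projections in generic position.

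I expect no real obstacle. The one point requiring care is that injectivity must be checked through the explicit formula for $\gamma^{a_k}$, not merely from $a_k\not\equiv 0$.
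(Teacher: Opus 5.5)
Your proof is correct and follows the paper's argument: you apply Theorem~\ref{Vas-generic} with $C_k=T_{a_k}$ and verify positivity, injectivity and $\sum_k T_{a_k}=I$ through the multiplication model, using the same explicit formulas for $\gamma^{a_k}$. You also make the commutativity of the $T_{a_k}$ explicit, which the paper leaves implicit. Your closing caution is slightly overstated: since $a_k\ge 0$ equals $1$ on a strip of positive area, $T_{a_k}f=0$ gives $\int_\Pi a_k|f|^2=0$, so the analytic $f$ vanishes on an open set and hence identically. The explicit formula is therefore one valid way to get injectivity, but not the only one.
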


\begin{proof}
	Equation \ref{gamma-(0)-a} implies that the Toeplitz operator 
	$T_{a_k}: \mathcal{A}^2(\Pi) \rightarrow \mathcal{A}^2(\Pi)$ is 
	unitarily equivalent to $\gamma^{a_k}I: L_2(\mathbb{R}_+) \rightarrow L_2(\mathbb{R}_+)$, where
	\begin{equation}\label{eq.gk}
		\gamma^{a_k}(x)=\int_{2(k-1)x}^{2kx}(\ell_0(y))^2 dy =\int_{2(k-1)x}^{2kx}e^{-y} dy=e^{-2kx}\left[-1+e^{2x}\right] \quad    
	\end{equation}
	for $k=1,\dots,n-1$, and
	\begin{equation}\label{eq:gn}
		\gamma^{a_n}(x)= \int_{2(n-1)x}^{\infty} e^{-y}dy=e^{-2(n-1)x}.
	\end{equation}
	Each Toeplitz operators $T_{a_k}$ is injective and positive since  the 
	multiplication operator $\gamma^{a_k}I$ holds these properties. 
	From the equality $a_1(z)+a_2(z)+\dots+a_n(z)=1$ it follows that
	$$T_{a_1}+T_{a_2}+\dots+T_{a_n}=I.$$
	Theorem \ref{Vas-generic} completes the proof.
\end{proof}

\vspace{0.3cm}
We now move on to describing the $C^*$-algebra $\mathcal{C}^*(P_T;\{Q_{\Pi,(j)}\}_{j=1}^n)$, we start by introducing a Bargmann-type transform that establishes an identification between the spaces $(\mathcal{A}^2(\Pi))^n$  and $(L_2(\mathbb{R}_+))^n$. 
We introduce the operator
\begin{equation*}\label{def:R}
	\mathbf{R}=\bigoplus_{k=1}^{n}R_1: (L_2(\Pi))^n \rightarrow (L_2(\mathbb{R}_+))^n,
\end{equation*}
where $R_1$ is given in Equation \ref{mapA_Ln}.  Recall that $R_1$ isometrically maps 
$\mathcal{A}^2(\Pi)$  onto $L_2(\mathbb{R}_+)$, 
this is the meaning of equations \eqref{Rn*Rn} and \eqref{RnRn*} for $n=1$.
Then the operator $\mathbf{R}$ holds the following properties
\begin{align*}
	\mathbf{R}^{ *}\mathbf{R}&= \bigoplus_{k=1}^{n}B_{\Pi,1} :(L_2(\Pi))^n \rightarrow (\mathcal{A}^2(\Pi))^n,   \\
	\mathbf{R} \mathbf{R}^{*}&= I: (L_2(\mathbb{R}_+))^n\rightarrow (L_2(\mathbb{R}_+))^n.   
\end{align*}
Then the restriction of $\mathbf{R}$ to $(\mathcal{A}^2(\Pi))^n$  is a unitary operator. 
Thus, we can identify each bounded operator acting on $(\mathcal{A}^2(\Pi))^n$ with a bounded operator acting on $(L_2(\mathbb{R}_+))^n$ through the map
\begin{equation}\label{iso}
	T\mapsto \mathbf{R} T\mathbf{R}^*.
\end{equation}

In particular, the orthogonal projection given in \eqref{QPij} is unitarily equivalent to
$$ \mathbf{R} Q_{\Pi,(j)} \mathbf{R}^*=Q_j,$$
where $Q_j$ is the orthogonal projection, acting on $(L_2(\mathbb{R}_+))^n$, and given by  
\begin{equation}\label{Qj}
	Q_j(f_1,\dots,f_n)^t=(0,\dots,f_j,\dots,0)^t.
\end{equation}
In what follows the operator $Q_j$ will be identified with the $n\times n$ matrix whose entries are zero except
the $(j,j)$-entry which is $1$.
Of course $Q_j Q_k =\delta_{jk} Q_j$ and $Q_1+\cdots+Q_{n}=I$.

Let $P_\gamma I$ be the orthogonal projection defined by $P_\gamma I:=\mathbf{R} P_T \mathbf{R}^{*}$. Note that $\gamma^{a_j}I=R_{1}T_{a_j}R_{1}^{*}$, then 
$R_1 \sqrt{T_{a_j}}\sqrt{T_{a_k}} R_1^*=\sqrt{\gamma^{a_j}}\sqrt{\gamma^{a_k}} I$.
Thus
\begin{equation}\label{P-gamma}
	P_\gamma  I=\mathbf{R} P_T \mathbf{R}^{*}=\left( \sqrt{\gamma^{a_j}}\sqrt{\gamma^{a_k}} \right)_{j,k=1,\dots,n}I
	\in B((L_2(\mathbb{R}_+))^n).
\end{equation}

\begin{corollary}\label{cor} 
	The set of operators $P_\gamma I, Q_1,\ldots, Q_n$ is a system all-but-one orthogonal projections in generic position. 
\end{corollary}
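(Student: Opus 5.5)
The corollary should follow from Lemma~\ref{cor:Vas-generic} by unitary invariance. Let $\mathbf{R}_0$ denote the restriction of $\mathbf{R}$ to $(\mathcal{A}^2(\Pi))^n$, which is a unitary operator onto $(L_2(\mathbb{R}_+))^n$. By construction,
\[
P_\gamma I=\mathbf{R}_0 P_T \mathbf{R}_0^{*}
\quad\text{and}\quad
Q_j=\mathbf{R}_0 Q_{\Pi,(j)}\mathbf{R}_0^{*}.
\]
Conjugation by a unitary preserves self-adjointness and idempotency, so every operator in the new system is an orthogonal projection.

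Conjugation also carries ranges to ranges: $\operatorname{Im}(\mathbf{R}_0 X\mathbf{R}_0^*)=\mathbf{R}_0(\operatorname{Im} X)$. Since $\mathbf{R}_0$ is unitary, it maps orthogonal complements to orthogonal complements and preserves intersections. Therefore
\[
(\operatorname{Im} P_\gamma I)^\perp\cap \operatorname{Im} Q_j=\mathbf{R}_0\big((\operatorname{Im} P_T)^\perp\cap \operatorname{Im} Q_{\Pi,(j)}\big)=\{0\},
\]
and the analogous identity holds for $\operatorname{Im} P_\gamma I\cap(\operatorname{Im} Q_j)^\perp$. The generic position conditions therefore transfer directly from Lemma~\ref{cor:Vas-generic}.

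As a cross-check, one can also apply Theorem~\ref{Vas-generic} directly with $L=L_2(\mathbb{R}_+)$ and $C_k=\gamma^{a_k}I$. These are commuting multiplication operators. By \eqref{eq.gk} and \eqref{eq:gn} each $\gamma^{a_k}$ is strictly positive on $(0,\infty)$, so each $C_k$ is positive and injective. Summing the integrals gives $\sum_k\gamma^{a_k}(x)=\int_0^\infty e^{-y}\,dy=1$, so $\sum_k C_k=I$. Theorem~\ref{Vas-generic} then yields the claim with $P=P_\gamma I$ and $P_j=Q_j$.

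The only point requiring care is correctly identifying $\mathbf{R}P_T\mathbf{R}^*$ with \eqref{P-gamma}. This identification uses the fact that the functional calculus commutes with unitary conjugation, namely $R_1\sqrt{T_{a_j}}R_1^*=\sqrt{\gamma^{a_j}}I$.
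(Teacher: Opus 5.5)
Your proof is correct and takes essentially the paper's route: the paper states this corollary without a separate proof, as an immediate consequence of Lemma \ref{cor:Vas-generic} transported through the unitary $\mathbf{R}$ restricted to $(\mathcal{A}^2(\Pi))^n$, which is exactly your main argument. Your cross-check, applying Theorem \ref{Vas-generic} directly with $C_k=\gamma^{a_k}I$, is also valid and is the multiplication-operator side of the paper's own proof of that lemma.
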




\subsection{The $C^*$-algebra generated by $P_T$ and $Q_{\Pi,(1)},\dots, Q_{\Pi,(n)}$ }\label{subsect-alg-PT}

In this section we have a two-fold purpose.  
First, the operators $P_{\gamma}I,Q_1,...,Q_n$ are identified with continuous matrix-valued functions on 
the two-point compactification of the positive real line.
Instead of using the techniques given in \cite{Douglas}, \cite{Halmos}, \cite{Pedersen}, \cite{Vasilevski} and \cite{projection},
we describe the $C^*$-algebra $\mathcal{C}^*(P_{\gamma};\{Q_j\}_{j=1}^n)$ using the noncommutative Stone-Weierstrass theorem. 
As a consequence, we get a description up to isomorphism of the $C^*$-algebra  $\mathcal{C}^*(P_T;\{Q_{\Pi,(j)}\}_{j=1}^n)$,
which is a subalgebra of $B((\mathcal{A}^2(\Pi))^n)$.
Secondly, an isomorphism from $(\mathcal{A}^2(\Pi))^n$ onto $\mathcal{A}_{n}^2(\Pi)$ allow us to
construct a system of all-but-one orthogonal projections $P,B_{\Pi,(1)},...,B_{\Pi,(n)}$ in generic position acting on the poly-Bergman space  $\mathcal{A}_{n}^2(\Pi)$.
At the end of the section  we make a comparison between $\mathcal{C}^*(P;\{B_{\Pi,(j) }\}_{j=1}^n)$ and the $C^*$-algebra generated by Toeplitz operators with vertical symbols in  $L_\infty^{0,\infty}(\mathbb{R}_+)$.

According to the isomorphism (\ref{iso}),
the $C^*$-algebra $\mathcal{C}^*(P_T;\{Q_{\Pi,(j)}\}_{j=1}^n)$ is  isomorphic to 
the $C^*$-algebra $\mathcal{C}^*(P_\gamma I;\{Q_j\}_{j=1}^n)$. 
In turn, this $C^*$-algebra is identified with the $C^*$-algebra $\mathcal{B}^{1,n}$ generated by the matrix-valued functions
$Q_1,...,Q_n$ and  $P_{\gamma}=\left( \sqrt{\gamma^{a_j}}\sqrt{\gamma^{a_k}} \right)_{j,k=1,\dots,n}$ , where
$Q_j$ also denotes the constant matrix-valued function whose components are zero except the $(j,j)$-entry which equals $1$.
Of course these matrices are continuous on the interval $[0,\infty]$.
Equations \ref{eq.gk} and \ref{eq:gn} imply that $\gamma^{a_1}(0)=\gamma^{a_n}(\infty)=0$, $\gamma^{a_n}(0)=\gamma^{a_1}(\infty)=1$ and
$\gamma^{a_k}(0)=\gamma^{a_k}(\infty)=0$ for $k=2,\dots,n-1$. That is, 
$$P_{\gamma} (0)=E_{nn} 
\qquad \text{and}  \qquad
P_{\gamma}(+\infty)=E_{11},$$
where $E_{jk}$ stands for the  $n \times n$-matrix whose components are zero, except for the $(j,k)$-entry which equals 1. 

\begin{notation}
	Let $\mathcal{D}_n^{1,n}$ be the $C^*$-algebra consisting of all matrices $M\in M_n(\mathbb{C})\otimes C[0,\infty]$ such that $M(0),M(\infty)$ are diagonal, and $M_{jj}(0)=M_{jj}(\infty)$ for all $j=2,\dots,n-1$.
\end{notation}

Note that the generators of $\mathcal{B}^{1,n}:={\mathcal{C}}^*(P_{\gamma};\{Q_j\}_{j=1}^n)$ belong to the algebra $\mathcal{D}_n^{1,n}$, thus $\mathcal{B}^{1,n}\subset \mathcal{D}_n^{1,n}$. 
To prove that these algebras coincide we use the noncommutative Stone-Weierstrass  theorem for postliminal $C^*$-algebras.
Now every liminal $C^*$-algebra is postliminal, and every $C^*$-subalgebra of a postliminal $C^*$-algebra is postliminal (\cite{Dixmier}).
Since $\mathcal{A}:=M_n(\mathbb{C})\otimes C[0,\infty]$ is liminal, the $C^*$-algebra $\mathcal{D}_n^{1,n}$ is postliminal.
Actually $\mathcal{D}_n^{1,n}$ is liminal. Of course, we have to identify all pure states of $\mathcal{D}_n^{1,n}$ in order to 
prove that $\mathcal{B}^{1,n}=\mathcal{D}_n^{1,n}$. 
For every pure state $\varphi$ of $\mathcal{D}_n^{1,n}$ there is a pure state $\psi$ of $\mathcal{A}$ extending $\varphi$,
see Theorem 5.1.13 in \cite{Murphy}, thus all pure states of $ \mathcal{D}_n^{1,n}$ have the form given in (\ref{gen-pure-states}).
For $x_0\in (0,+\infty)$, the evaluation map $\pi_{x_0}: \mathcal{D}_n^{1,n} \rightarrow M_n(\mathbb{C})=B(\mathbb{C}^n)$ 
is an irreducible representation because it is surjective. 
Thus we have the pure states of $\mathcal{D}_n^{1,n}$ given by
\begin{equation}\label{estado}
	f_{x_0,v}(M)=\langle M(x_0)v,v\rangle, \quad M \in \mathcal{D}_n^{1,n},
\end{equation}
where  $v \in \mathbb{C}^n$ is a unimodular  vector. 
In addition, for $x_1\in (0,+\infty)$ with $x_0\neq x_1$, take a function $f\in C[0,\infty]$ such that
$f(0)=f(\infty)=f(x_1)=1$ and $f(x_0)=0$. Since $f \mathcal{D}_n^{1,n} \subset \mathcal{D}_n^{1,n}$, the representation
$\pi_{x_1}$ is not unitarily equivalent to $\pi_{x_0}$.
In each case $x_0=0,\infty$, the representation  $\pi_{x_0}: \mathcal{D}_n^{1,n} \rightarrow M_n(\mathbb{C})=B(\mathbb{C}^n)$ 
is  completely reducible, actually $\pi_{x_0}$ is unitarily equivalent to the direct sum $\pi_{x_0,1} \oplus \cdots\oplus \pi_{x_0,n}$,
where $\pi_{x_0,j}$ is the one-dimensional representation of $\mathcal{D}_n^{1,n}$ given by
$\pi_{x_0,j}(M)=M_{jj}(x_0)$. These one-dimensional representations are pure states of $ \mathcal{D}_n^{1,n} $,
and they can be rewritten as
$$f_{x_0,e_j}(M)=\langle M(x_0)e_j,e_j \rangle,$$
where  $\{e_1,\dots, e_n\}$ is the canonical basis for $\mathbb{C}^n$. 
Moreover, each $M\in \mathcal{D}_n^{1,n}$  satisfies that  $M_{jj}(0)=M_{jj}(\infty)$ for all $ j=2,\dots,n-1 $, then the pure states $f_{0,e_j}$ and $f_{\infty,e_j}$ coincide.  There are no more pure states of $\mathcal{D}_n^{1,n}$.

\begin{theorem}\label{isPT}
	The $C^*$-algebra $\mathcal{C}^*(P_T; \{Q_{\Pi,(j)}\}_{j=1}^n)$ is isomorphic and isometric to the $C^*$-algebra $\mathcal{D}_n^{1,n}$. 
	Equivalently, the $C^*$-algebra 
	$\mathcal{D}_n^{1,n}$ is generated by 
	\begin{equation*}\label{Pgamma-Qjs}
		P_\gamma, Q_1,\dots, Q_{n}.
	\end{equation*}
	Moreover, the map $\mathcal{C}^*(P_T; \{Q_{\Pi,(j)}\}_{j=1}^n) \ni S\longmapsto \mathbf{R}  S \mathbf{R}^* \in \mathcal{D}_n^{1,n}$ is an isometric isomorphism of $C^*$-algebras, where
	\begin{equation*}
		P_T \longmapsto P_\gamma \quad \text{and} \quad Q_{\Pi,(j)} \longmapsto Q_j.
	\end{equation*}
\end{theorem}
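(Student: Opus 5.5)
The plan is to reduce everything to the statement $\mathcal{B}^{1,n}=\mathcal{D}_n^{1,n}$ and then apply Kaplansky's theorem (the noncommutative Stone--Weierstrass theorem quoted from \cite{Dixmier}). The map $S\mapsto \mathbf{R}S\mathbf{R}^*$ is a unitary conjugation, hence an isometric $*$-isomorphism of $B((\mathcal{A}^2(\Pi))^n)$ onto $B((L_2(\mathbb{R}_+))^n)$. It sends $P_T\mapsto P_\gamma I$ and $Q_{\Pi,(j)}\mapsto Q_j$, so it carries $\mathcal{C}^*(P_T;\{Q_{\Pi,(j)}\})$ onto $\mathcal{C}^*(P_\gamma I;\{Q_j\})$. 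Next, the map $M\mapsto MI$ from $M_n(\mathbb{C})\otimes C[0,\infty]$ into $B((L_2(\mathbb{R}_+))^n)$ is an injective $*$-homomorphism, hence isometric. It is injective because $\|MI\|=\sup_{x>0}\|M(x)\|$ and $M$ is continuous on $[0,\infty]$. Therefore $\mathcal{C}^*(P_\gamma I;\{Q_j\})\cong\mathcal{B}^{1,n}$. It was already observed that $\mathcal{B}^{1,n}\subset\mathcal{D}_n^{1,n}$, that $\mathcal{D}_n^{1,n}$ is postliminal, and that its pure states are $f_{x_0,v}$ for $x_0\in(0,\infty)$ and $\|v\|=1$, together with $f_{0,e_j}$ and $f_{\infty,e_j}$, where $f_{0,e_j}=f_{\infty,e_j}$ for $2\le j\le n-1$. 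It therefore suffices to show that $\mathcal{B}^{1,n}$ separates these pure states.

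First I would show that $\mathcal{B}^{1,n}$ contains enough scalar functions. The element $Q_1P_\gamma Q_1=\gamma^{a_1}E_{11}$ lies in $\mathcal{B}^{1,n}$. Similarly $Q_jP_\gamma Q_k=\sqrt{\gamma^{a_j}\gamma^{a_k}}E_{jk}$ and $Q_nP_\gamma Q_n=\gamma^{a_n}E_{nn}$. Now $\gamma^{a_n}(x)=e^{-2(n-1)x}$ is strictly decreasing, so it separates the points of $[0,\infty]$. By the commutative Stone--Weierstrass theorem, $C^*(\gamma^{a_n}E_{nn},E_{nn})=C[0,\infty]E_{nn}$. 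Multiplying on both sides by $Q_kP_\gamma Q_n$ and its adjoint gives $g\,\gamma^{a_k}\gamma^{a_n}E_{kk}$ for all $g\in C[0,\infty]$. Because $\gamma^{a_k}\gamma^{a_n}>0$ on $(0,\infty)$, the functions $g\,\gamma^{a_k}\gamma^{a_n}$ contain all $h\in C_0(0,\infty)$ after closure. Hence $C_0(0,\infty)E_{kk}\subset\mathcal{B}^{1,n}$, and in the same way $C_0(0,\infty)E_{jk}\subset\mathcal{B}^{1,n}$. Together with $Q_j\in\mathcal{B}^{1,n}$, this shows that $\mathcal{B}^{1,n}$ contains $M_n(C_0(0,\infty))$ and all constant diagonal matrices.

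Separation then proceeds case by case. Two states at distinct interior points $x_0\ne x_1$ are separated by $hE_{11}$ with $h\in C_0(0,\infty)$, $h(x_0)=1$ and $h(x_1)=0$. Two states $f_{x_0,v}$ and $f_{x_0,w}$ at the same interior point, with $v,w$ not proportional, are separated by some $hA$, where $A\in M_n(\mathbb{C})$ and $h(x_0)=1$. An interior state is separated from a boundary state by $hE_{jj}$ with $h(x_0)=1$, since every such element vanishes at $0$ and $\infty$. Boundary states at the same endpoint, $f_{x,e_j}$ versus $f_{x,e_k}$, are separated by $Q_j$. The remaining cases compare boundary states at opposite endpoints: $f_{0,e_j}$ versus $f_{\infty,e_k}$. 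When $j\neq k$, the element $Q_j$ separates them. When $j=k\in\{1,n\}$, the element $Q_nP_\gamma Q_n$ takes the value $1$ at $0$ and $0$ at $\infty$, and $Q_1P_\gamma Q_1$ does the reverse. When $2\le j\le n-1$, the two states coincide, so nothing needs to be separated.

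The main obstacle is the step extracting $C_0(0,\infty)\otimes M_n$ from the generators, which relies on positivity of the $\gamma^{a_k}$ on $(0,\infty)$ and on the monotonicity of $\gamma^{a_n}$. Once that is in place, Kaplansky's theorem gives $\mathcal{B}^{1,n}=\mathcal{D}_n^{1,n}$, and composing the isometric isomorphisms above yields the stated correspondence $P_T\mapsto P_\gamma$, $Q_{\Pi,(j)}\mapsto Q_j$.
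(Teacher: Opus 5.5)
Your proposal is correct, but its key step differs from the paper's. You both finish with Kaplansky's theorem; the difference is in how separation is obtained.

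The paper never extracts an ideal from $\mathcal{B}^{1,n}$. In Lemma~\ref{lem:sep-x0-x1-2} it assumes that $f_{x_0,v}$ and $f_{x_1,w}$ agree on the finitely many words $Q_jP_\gamma Q_k$ and $Q_jP_\gamma Q_kP_\gamma Q_j$. It reads these off as scalar equations in $\sqrt{\gamma^{a_j}\gamma^{a_k}}\,v_k\overline{v_j}$ and $\gamma^{a_j}\gamma^{a_k}|v_j|^2$. It then solves them, dividing out $|v_j|^2$ and using injectivity of $\gamma^{a_n}$, to get $x_0=x_1$ and $v=\lambda w$. Lemmas~\ref{lem:sep-ext-2} and \ref{lem:sep-extyx0-2} then treat the boundary states with $Q_jP_\gamma Q_j$, $Q_j$ and $P_\gamma$, much as you do.

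You instead first prove that $M_n(C_0(0,\infty))\subset\mathcal{B}^{1,n}$. You get this from the commutative Stone--Weierstrass theorem applied to $\gamma^{a_n}$, plus the fact that $g\mapsto g\varphi$ has dense range in $C_0(0,\infty)$ whenever $\varphi>0$ on $(0,\infty)$. After that, separation is immediate.

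Your route actually yields more, and makes Kaplansky's theorem and the classification of pure states unnecessary. Take any $M\in\mathcal{D}_n^{1,n}$:
\begin{itemize}
\item correct the boundary values of $M_{11}$ and $M_{nn}$ using $\gamma^{a_1}E_{11}$, $\gamma^{a_n}E_{nn}$ and the $Q_j$;
\item correct those of $M_{jj}$, $2\le j\le n-1$, using $M_{jj}(0)Q_j$;
\item what remains lies in $M_n(C_0(0,\infty))$.
\end{itemize}
So $M\in\mathcal{B}^{1,n}$ directly. This also exhibits the ideal $M_n(C_0(0,\infty))$, with quotient $\mathbb{C}^{n+2}$. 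You also justify explicitly that $M\mapsto MI$ is isometric, which the paper leaves implicit. The paper's route has its own merit: it stays within the pure-state separation method the authors want to showcase, uses only a finite list of explicit words, and is the template repeated for $\mathcal{C}^*(T_{n,a_0},B_{\Pi,(1)},\dots,B_{\Pi,(n)})$.

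One small slip needs fixing. $hE_{11}$ does not separate $f_{x_0,v}$ from $f_{x_1,w}$ when $v_1=0$. Likewise, $hE_{jj}$ separates an interior state from a boundary state only if $v_j\neq 0$; the paper's Lemma~\ref{lem:sep-ext-2} carries exactly this hypothesis. Use $hI=\sum_j hE_{jj}\in\mathcal{B}^{1,n}$ instead: it takes the value $1$ on $f_{x_0,v}$ and $0$ on the other state in both cases.
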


\begin{proof}   
	Lemmas \ref{lem:sep-x0-x1-2}, \ref{lem:sep-ext-2} and \ref{lem:sep-extyx0-2} show that $\mathcal{B}^{1,n}$ separates all the pure states of $\mathcal{D}_n^{1,n}$. Then, the noncommutative  Stone-Weierstrass theorem  implies that 
	$\mathcal{D}_n^{1,n}=\mathcal{B}^{1,n}$.
\end{proof} 

\vspace{0.3cm}
We proceed to prove  that the algebra $\mathcal{B}^{1,n}$ separates all pure states of $\mathcal{D}_n^{1,n}$. 
The following lemma states that
the $C^*$-algebra $\mathcal{B}^{1,n}$ separates the pure states $f_{x_0,v}$ and $f_{x_1,w}$ when $x_0,\, x_1 \in (0,\infty)$ are distinct real numbers.

\begin{lemma}\label{lem:sep-x0-x1-2}
	Let $v, w \in \mathbb{C}^n$ be unit vectors,  and $x_0, x_1 \in (0, \infty)$. 
	Suppose that
	\begin{align}
		f_{x_0,v}(Q_j P_\gamma I Q_k)=f_{x_1,w}(Q_jP_\gamma I Q_k), \label{fQjQk-2}\\
		f_{x_0,v}(Q_j P_\gamma I Q_kP_\gamma I Q_j)=f_{x_1,w}(Q_jP_\gamma I Q_kP_\gamma I Q_j) \label{fQjQkQl-2}
	\end{align}
	for all $j,k=1,\dots,n$.
	Then $x_0=x_1$ and $v=\lambda w$, where $\lambda$ is a unimodular complex number.
\end{lemma}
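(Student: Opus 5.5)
Write $g_k=\gamma^{a_k}$, so that by \eqref{eq.gk} and \eqref{eq:gn} every $g_k$ is strictly positive on $(0,\infty)$. The plan is to compute both families of states explicitly. Since $Q_jP_\gamma Q_k=\sqrt{g_j g_k}\,E_{jk}$, we get
\[
f_{x,v}(Q_jP_\gamma Q_k)=\sqrt{g_j(x)g_k(x)}\,v_k\overline{v_j}.
\]
Similarly $Q_jP_\gamma Q_kP_\gamma Q_j=g_j g_k E_{jj}$, which gives
\[
f_{x,v}(Q_jP_\gamma Q_kP_\gamma Q_j)=g_j(x)g_k(x)|v_j|^2 .
\]

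\textbf{Step 1: summing \eqref{fQjQkQl-2} over $k$.} Because $\sum_k g_k=1$, this yields $g_j(x_0)|v_j|^2=g_j(x_1)|w_j|^2$ for every $j$. Summing \eqref{fQjQk-2} with $j=k$ over $j$ gives the same identity again, so it carries no new information.

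\textbf{Step 2: finding an index with $v_j\neq 0$.} Pick $j$ with $v_j\neq0$. Then $w_j\neq 0$ too, since $g_j>0$ on $(0,\infty)$. Dividing \eqref{fQjQkQl-2} by the identity from Step 1 gives $g_k(x_0)=g_k(x_1)$ for all $k$.

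\textbf{Step 3: showing $x_0=x_1$.} It is enough to take $k=n$. The function $g_n(x)=e^{-2(n-1)x}$ is strictly decreasing when $n\ge2$, so $x_0=x_1$. For $n=1$ the statement degenerates: $P_\gamma=I$ and $f_{x,v}$ does not depend on $x$. So I assume $n\ge2$, which is the case the paper intends.

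\textbf{Step 4: recovering $v$ up to a phase.} With $x_0=x_1=x$ and $g_k(x)>0$, we may divide \eqref{fQjQk-2} by $\sqrt{g_jg_k}(x)$. This gives $v_k\overline{v_j}=w_k\overline{w_j}$ for all $j,k$, that is, the rank-one matrices $vv^*=ww^*$ coincide. Since both vectors are unit vectors, $v=\lambda w$ with $\lambda=\langle v,w\rangle$ and $|\lambda|=1$. Explicitly, fix $j$ with $w_j\neq0$ and set $\lambda=v_j/w_j$. Then $|\lambda|=1$ from the diagonal case of the identity, and $v_k=\lambda w_k$ follows from $v_k\overline{v_j}=w_k\overline{w_j}$.

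The only real obstacle is Step 3, separating $x_0$ from $x_1$. It requires the strict monotonicity of $\gamma^{a_n}$ together with strict positivity of all $\gamma^{a_k}$ on the open half-line. Both follow directly from \eqref{eq.gk} and \eqref{eq:gn}.
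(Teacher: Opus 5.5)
Your proof is correct and follows the paper's argument essentially step for step. The paper obtains $\gamma^{a_j}(x_0)|v_j|^2=\gamma^{a_j}(x_1)|w_j|^2$ directly from the $j=k$ case of the first hypothesis rather than by summing the second over $k$; it then divides to get $\gamma^{a_k}(x_0)=\gamma^{a_k}(x_1)$, uses injectivity of $\gamma^{a_n}$, and recovers the phase as you do. Your remark that the lemma fails for $n=1$ (where $\gamma^{a_1}\equiv 1$ and $P_\gamma=I$) is correct, and it exposes an implicit assumption $n\ge 2$ that the paper never states.
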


\begin{proof}    
	From (\ref{fQjQk-2}) and (\ref{fQjQkQl-2}) we get
	\begin{align*}
		\langle Q_j(x_0) P_\gamma(x_0)Q_k(x_0) v,v \rangle &= \langle Q_j(x_1) P_\gamma(x_1)Q_k(x_1) w,w \rangle,\\
		\langle Q_j(x_0) P_\gamma(x_0)Q_k(x_0)P_\gamma(x_0)Q_j(x_0)v,v \rangle &= \langle Q_j(x_1) P_\gamma(x_1)Q_k(x_1)P_\gamma(x_1)Q_j(x_1) w,w \rangle.
	\end{align*}
	That is, 
	\begin{align}
		\sqrt{\gamma^{a_j}(x_0)\gamma^{a_k}(x_0)}v_k\overline{v_j}  
		&= \sqrt{\gamma^{a_j}(x_1)\gamma^{a_k}(x_1)}w_k\overline{w_j},
		\label{ljk} \\
		\gamma^{a_j}(x_0)\gamma^{a_k}(x_0) |v_j|^2 
		&=
		\gamma^{a_j}(x_1)\gamma^{a_k}(x_1)|w_j|^2. \label{ljl}
	\end{align}
	
	In the case $j=k$, Equation $\ref{ljk}$ reduces to
	\begin{align}
		\gamma^{a_j}(x_0)|v_j |^2  
		&= \gamma^{a_j}(x_1)|w_j|^2.
		\label{ljk1}
	\end{align}

	For $x_0, x_1 \in (0, \infty)$, the values $\gamma^{a_j}(x_0)$ and $\gamma^{a_j}(x_1)$ are different from zero for all $j=1,\dots,n$. Then \eqref{ljk1} implies that $v_j\neq 0$ if and only if $w_j\neq 0$.
	Choose $j$ such that $v_j \neq 0$. 
	Substituting (\ref{ljk1}) in (\ref{ljl}) and reducing, we obtain
	\begin{equation*}\label{gak}
		\gamma^{a_k}(x_0) = \gamma^{a_k}(x_1), \quad \text{ for all }\, k=1,\dots,n.   
	\end{equation*}
	In particular $\gamma^{a_n}(x_0)=\gamma^{a_n}(x_1)$. That is, 
	\begin{align*}
		\int_{2(n-1)x_0}^{\infty}(\ell_0(y))^2 dy&=\int_{2(n-1)x_1}^{\infty}(\ell_0(y))^2dy, 
	\end{align*}
	which implies $x_0=x_1$. 
	From (\ref{ljk}) we get
	$v_k\overline{v_j}=w_k\overline{w_j}\ $ for all $ k=1,\dots,n.$ 
	Thus
	\begin{equation*}\label{razon1}
		\frac{v_k}{w_k}=\frac{\overline{w_j}}{\overline{v_j}},  
	\end{equation*}
	for all $k$ with $v_k\neq 0$. At this point, define $\lambda=\overline{w_j}/\overline{v_j}$, note that $\lambda$ does not depend on $j$.
	Then we can write 
	$v_k=\lambda w_k$  for all  $k=1,\dots,n$, or equivalently
	$v=\lambda w$, where $|\lambda|=1.$
\end{proof}

The following lemma shows that the algebra $\mathcal{B}^{1,n}$ separates the pure states 
$f_{x_0,v}$ and $f_{x_1,e_k}$, where $x_0\in (0, \infty)$, $x_1\in\{0,\infty\}$ and $k\in \{1,\dots,n\}.$

\begin{lemma}\label{lem:sep-ext-2} 
	Let $v=(v_1,\dots,v_n)^t \in \mathbb{C}^n$ be a unit vector, 
	$x_0 \in (0,\infty)$ and $x_1=0, \infty$.
	If $v_j\neq 0$, then
	$$f_{x_0,v}(Q_j P_\gamma I Q_j) \neq f_{x_1,e_k}(Q_j P_\gamma I Q_j),\ \ k=1,\dots,n.$$
\end{lemma}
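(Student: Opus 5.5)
The plan is a direct computation: evaluate both pure states on the single element $Q_jP_\gamma IQ_j$ and compare the two numbers. Since $Q_j$ is the matrix unit $E_{jj}$ and $P_\gamma=\bigl(\sqrt{\gamma^{a_l}}\sqrt{\gamma^{a_m}}\bigr)_{l,m}$, we have
\[
Q_jP_\gamma Q_j=\gamma^{a_j}E_{jj}
\]
as a matrix-valued function on $[0,\infty]$. Hence, by the formula for the pure states,
\[
f_{x_0,v}(Q_jP_\gamma IQ_j)=\gamma^{a_j}(x_0)|v_j|^2,
\qquad
f_{x_1,e_k}(Q_jP_\gamma IQ_j)=\gamma^{a_j}(x_1)\,\delta_{jk}.
\]

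First I would bound the left-hand value strictly between $0$ and $1$. Positivity comes from two facts: $v_j\neq0$, and $\gamma^{a_j}(x_0)>0$ for every $x_0\in(0,\infty)$, which follows from the explicit formulas \eqref{eq.gk} and \eqref{eq:gn}. For the upper bound, $|v_j|^2\le 1$ because $v$ is a unit vector. It remains to show $\gamma^{a_j}(x_0)<1$. The identity $\sum_{l}\gamma^{a_l}=1$ holds, since the symbols $a_l$ sum to $1$. Assuming $n\ge 2$, each other summand $\gamma^{a_l}(x_0)$ is strictly positive, so $\gamma^{a_j}(x_0)<1$. The same conclusion can be read off directly: $\gamma^{a_n}(x_0)=e^{-2(n-1)x_0}<1$ and $\gamma^{a_k}(x_0)=e^{-2kx_0}(e^{2x_0}-1)<1$. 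Therefore
\[
0<\gamma^{a_j}(x_0)|v_j|^2<1 .
\]

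Next I would show that the right-hand value lies in $\{0,1\}$. The boundary values computed before the Notation give $P_\gamma(0)=E_{nn}$ and $P_\gamma(\infty)=E_{11}$. So for $x_1\in\{0,\infty\}$ we have $\gamma^{a_j}(x_1)\in\{0,1\}$, and multiplying by $\delta_{jk}$ keeps the value $\gamma^{a_j}(x_1)\delta_{jk}$ in $\{0,1\}$. A number strictly between $0$ and $1$ cannot equal $0$ or $1$, which gives the claimed inequality for every $k$.

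The only delicate point is the strict inequality $\gamma^{a_j}(x_0)<1$. This genuinely needs $n\ge2$; the degenerate case $n=1$ is trivial and is implicitly excluded. I would state this assumption explicitly in the proof.
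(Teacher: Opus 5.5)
Your proof is correct and follows the paper's argument. Both compute $f_{x_1,e_k}(Q_jP_\gamma IQ_j)\in\{0,1\}$ from $P_\gamma(0)=E_{nn}$ and $P_\gamma(\infty)=E_{11}$, and then observe that $f_{x_0,v}(Q_jP_\gamma IQ_j)=\gamma^{a_j}(x_0)|v_j|^2\in(0,1)$; your explicit verification that $\gamma^{a_j}(x_0)<1$, together with the remark that this needs $n\ge 2$, fills in a step the paper only asserts.
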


\begin{proof}  
	Since $P_\gamma(0)=E_{nn}$ and $P_\gamma(\infty)=E_{11}$, we have
	\begin{align*}
		f_{0,e_k}(Q_j P_\gamma I Q_j)&= \langle Q_j(0)P_\gamma(0)Q_j(0) e_k, e_k\rangle \\
		&= \langle E_{jj}E_{nn}E_{jj} e_k, e_k\rangle\\
		&=\delta_{jn} \delta_{kn}, \quad k=1,\dots,n. \\
	\end{align*}
	Similarly
	\begin{align*}
		f_{\infty,e_k}(Q_j P_\gamma I Q_j)&=\delta_{j1} \delta_{k1}, \quad k=1,\dots,n.
	\end{align*}
	Finally, we have
	$f_{x_0,v}(Q_j P_\gamma I Q_j)=\gamma^{a_j}(x_0) v_j \overline{v_j}$ with $\gamma^{a_l}(x_0)\in (0,1)$. 
	Suppose that $v_j\neq 0$, then
	$f_{x_0,v}(Q_j P_\gamma I Q_j) \in (0,1)$. 
	Hence,
	$f_{x_0,v}(Q_j P_\gamma I Q_j) \neq f_{x_1,e_k}(Q_j P_\gamma I Q_j)$ for  $k=1,\dots,n$.
\end{proof}

\begin{lemma}\label{lem:sep-extyx0-2} 
	The algebra $\mathcal{B}^{1,n}$ separates all the pure states 
	$$f_{0,e_1},\dots,f_{0,e_n}, f_{\infty,e_1},f_{\infty,e_n}$$
	of the algebra $\mathcal{D}_n^{1,n}$.
\end{lemma}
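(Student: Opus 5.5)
The plan is to exhibit, for each pair of distinct pure states from the list, an explicit element of $\mathcal{B}^{1,n}$ on which they differ. First I fix the list, recalling that $f_{0,e_j}=f_{\infty,e_j}$ for $j=2,\dots,n-1$. The distinct states are therefore
\[
f_{0,e_1},\ f_{\infty,e_1},\ f_{0,e_n},\ f_{\infty,e_n},\ \text{and}\ f_{0,e_j}\ (j=2,\dots,n-1).
\]
The pair $f_{0,e_1}$ and $f_{\infty,e_1}$ must be separated, and likewise $f_{0,e_n}$ and $f_{\infty,e_n}$. All other pairs involve distinct basis vectors.

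Step one handles states attached to different basis vectors $e_j\neq e_k$. I use the constant projection $Q_j\in\mathcal{B}^{1,n}$. For $x,x'\in\{0,\infty\}$ we have $f_{x,e_k}(Q_j)=\langle E_{jj}e_k,e_k\rangle=\delta_{jk}$. Hence $f_{x,e_j}(Q_j)=1\neq 0=f_{x',e_k}(Q_j)$, so every pair with $j\neq k$ is separated, whatever the endpoints are.

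Step two separates the two endpoint states attached to the same vector. For this I use the computation in the proof of Lemma \ref{lem:sep-ext-2}, which rests on $P_\gamma(0)=E_{nn}$ and $P_\gamma(\infty)=E_{11}$. Take $j=1$. Then
\[
f_{0,e_1}(Q_1P_\gamma Q_1)=\langle E_{11}E_{nn}E_{11}e_1,e_1\rangle=0,
\]
while
\[
f_{\infty,e_1}(Q_1P_\gamma Q_1)=\langle E_{11}E_{11}E_{11}e_1,e_1\rangle=1.
\]
In the same way, $P_\gamma$ itself gives $f_{0,e_n}(P_\gamma)=1$ and $f_{\infty,e_n}(P_\gamma)=0$. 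This step uses $n\geq 2$, so that $E_{11}E_{nn}=0$. That is exactly what makes the values differ, and it is the only place where the nontrivial generator $P_\gamma$ is needed.

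I do not expect a genuine obstacle here. The one point that needs care is the bookkeeping: identifying which states actually coincide, so that the lemma is not asked to separate $f_{0,e_j}$ from $f_{\infty,e_j}$ for $2\le j\le n-1$, which it cannot do. It also means checking that the boundary values of $P_\gamma$ are the rank-one matrices $E_{nn}$ and $E_{11}$. Both facts were established just before the notation for $\mathcal{D}_n^{1,n}$. Combining steps one and two then shows that $\mathcal{B}^{1,n}$ separates all the listed pure states.
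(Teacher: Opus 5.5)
Your proof is correct and follows the paper's argument. The constant projections $Q_j$ separate every pair of boundary states attached to different basis vectors, and the boundary values $P_\gamma(0)=E_{nn}$, $P_\gamma(\infty)=E_{11}$ separate $f_{0,e_1}$ from $f_{\infty,e_1}$ and $f_{0,e_n}$ from $f_{\infty,e_n}$; the differences are cosmetic (the paper uses $P_\gamma$ itself rather than $Q_1P_\gamma Q_1$, and uses $P_\gamma$ for the pair $f_{\infty,e_1}, f_{\infty,e_n}$, which your $Q_1$ already handles), and your remark that $n\ge 2$ is needed is a correct point the paper leaves implicit.
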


\begin{proof}  
	Since $f_{0,e_k}(Q_j)=\delta_{kj}$,
	then $Q_j$ separates the pure states $f_{0,e_j}$ and $f_{0,e_k}$ for $j\neq k$. 
	
	Note that  $f_{0,e_k}(Q_k)=1$ and $f_{\infty,e_1}(Q_k)=\delta_{1k}$,
	then $f_{0,e_k}$ and $f_{\infty, e_1}$ are separated by $Q_k$ for $k=2, \ldots, n.$ 
	Similarly, for $k=1, \dots, n-1$, the pure states $f_{0,e_k}$ and $f_{\infty, e_n}$ are separated by $Q_k$.
	
	Finally
	$$f_{0,e_1}(P_\gamma I)=\langle E_{nn} e_1, e_1\rangle =0, \quad f_{\infty,e_1}(P_\gamma I)=\langle E_{11} e_1, e_1\rangle=1,$$
	$$f_{0,e_n}(P_\gamma I)=\langle E_{nn} e_n, e_n\rangle=1, \quad f_{\infty,e_n}(P_\gamma I)=\langle E_{11} e_n, e_n\rangle=0.$$
	From this, $P_\gamma I$ separates the states $f_{0,e_1}$ and $f_{\infty,e_1}$,  it also separates the states  $f_{0,e_n}$ and $f_{\infty,e_n}$, 
	additionally it separates $f_{\infty,e_1}$ and $f_{\infty,e_n}$.
	The proof is complete.
\end{proof} 

\vspace*{.5cm}

We now turn to our main purpose:  on the existence of orthogonal projections 
and their relationship with $C^*$-algebras generated by Toeplitz operators acting
on poly-Bergman spaces.
Since $\mathcal{A}_{n}^2(\Pi)= \oplus_{k=1}^n \mathcal{A}_{(k)}^2(\Pi)$, and
$\mathcal{A}_{(k)}^2(\Pi)$
is isomorphic to the Bergman space $\mathcal{A}^2(\Pi)$,
the poly-Bergman space $\mathcal{A}_{n}^2(\Pi)$ is isomorphic to 
$(\mathcal{A}^2(\Pi))^n$.
Then, the system of
orthogonal projections $P_T,Q_{\Pi,(1)},\dots,Q_{\Pi,(n)}$
has a counterpart in a system of orthoprojections acting on $\mathcal{A}_{n}^2(\Pi)$.
We shift the scenario from the first part of this section to
the study of the $C^* $-algebra generated by the system of all-but-one orthogonal projections
$P,B_{\Pi,(1)}$,\dots,$B_{\Pi,(n)}$ in generic position,
where $B_{\Pi,(k)}$ is the orthogonal projection
from $\mathcal{A}_n^2(\Pi)$ onto $\mathcal{A}_{(k)}^2(\Pi)$, and
\begin{equation}\label{def-P}
	P:=  R_n^* \mathbf{R}  P_T \mathbf{R}^* R_n:\mathcal{A}_n^2(\Pi) \longrightarrow \mathcal{A}_n^2(\Pi).
\end{equation}
Section \ref{RK.} is devoted  to identifying the image of 
$P$ as a reproducing kernel Hilbert space. 
On the other hand, we  have
$$B_{\Pi,(j)}= R_n^* \mathbf{R}  Q_{\Pi,(j)} \mathbf{R}^* R_n,$$
where the operator $R_n$, given in (\ref{mapA_Ln}), identifies $\mathcal{A}_n^2(\Pi)$ with $(L_2(\mathbb{R}_+))^n$. 
Then the following commutative diagram shows the operators and spaces we are dealing with
$$
\xymatrix{  (\mathcal{A}^2(\Pi))^n \ar[d]_{\mathbf{R}} \ar[rr]^{P_T, \ Q_{\Pi,(j)}}  & &   (\mathcal{A}^2(\Pi))^n  \ar[d]^{\mathbf{R}} \\
	(L_2(\mathbb{R_+}))^n \ar[d]_{R_n^*} \ar[rr]^{P_\gamma I, \ Q_j}  & & (L_2(\mathbb{R_+}))^n \ar[d]^{R_n^*} \\
	\mathcal{A}_n^2(\Pi) \ar[rr]^{P, \ B_{\Pi,(j)}}  &  & \mathcal{A}_n^2(\Pi)  
}$$

\begin{theorem}\label{C-P-Bj}
	The $C^*$-algebra $\mathcal{C}^*(P;\{B_{\Pi,(j) }\}_{j=1}^n)$ is isomorphic and isometric to the $C^*$-algebra $\mathcal{D}_n^{1,n}$. 
	Moreover, the map $\mathcal{C}^*(P;\{B_{\Pi,(j) }\}_{j=1}^n) \ni T\longmapsto R_{n}  T R^*_{n} \in \mathcal{D}_n^{1,n}$ 
	is an isometric isomorphism of $C^*$-algebras, where
	\begin{equation*}
		P \longmapsto P_\gamma \quad \text{and} \quad B_{\Pi,(j)} \longmapsto Q_j.
	\end{equation*}
\end{theorem}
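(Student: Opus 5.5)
The plan is to transport Theorem \ref{isPT} along the unitary $R_n$. By (\ref{Rn*Rn}) and (\ref{RnRn*}), the restriction of $R_n$ to $\mathcal{A}_n^2(\Pi)$ is a unitary onto $(L_2(\mathbb{R}_+))^n$, with inverse $R_n^*$. Hence the map $\Phi(T)=R_nTR_n^*$ is a $*$-isomorphism from $B(\mathcal{A}_n^2(\Pi))$ onto $B((L_2(\mathbb{R}_+))^n)$, and it is isometric. Since a $*$-isomorphism carries the $C^*$-algebra generated by a family onto the $C^*$-algebra generated by the image family, it suffices to compute the images of the generators.

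For $P$, definition (\ref{def-P}) together with $R_nR_n^*=I$ gives
\[
R_nPR_n^*=R_nR_n^*\,\mathbf{R}P_T\mathbf{R}^*\,R_nR_n^*=\mathbf{R}P_T\mathbf{R}^*=P_\gamma I,
\]
where the last equality is (\ref{P-gamma}). For $B_{\Pi,(j)}$, I first need to justify the stated identity $B_{\Pi,(j)}=R_n^*\mathbf{R}Q_{\Pi,(j)}\mathbf{R}^*R_n=R_n^*Q_jR_n$. By Theorem \ref{Relation}, $U$ maps $\mathcal{A}^2_{(j)}(\Pi)$ onto $L_2(\mathbb{R}_+)\otimes L_{j-1}$. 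Formula (\ref{R*n}) shows that $R_{0,n}^*$ sends $\chi_+(x)f(x)\ell_{j-1}(y)$ to $f\,e_j$, using the orthonormality of the Laguerre functions. Hence $R_n$ maps $\mathcal{A}^2_{(j)}(\Pi)$ onto the $j$-th coordinate copy $\operatorname{Im} Q_j$. Consequently $R_nB_{\Pi,(j)}R_n^*$ is the orthogonal projection onto $\operatorname{Im}Q_j$, that is, $Q_j$.

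Therefore $\Phi$ maps $\mathcal{C}^*(P;\{B_{\Pi,(j)}\}_{j=1}^n)$ isometrically and $*$-isomorphically onto $\mathcal{C}^*(P_\gamma I;\{Q_j\}_{j=1}^n)$. Under the identification of matrix multiplication operators with their continuous matrix symbols on $[0,\infty]$, this algebra is $\mathcal{B}^{1,n}$, and Theorem \ref{isPT} gives $\mathcal{B}^{1,n}=\mathcal{D}_n^{1,n}$. That identification $M\mapsto MI$ is isometric because the sup norm of a continuous matrix function equals the operator norm of the corresponding multiplication operator on $(L_2(\mathbb{R}_+))^n$. This is the same identification already used before Theorem \ref{isPT}.

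The main obstacle is the verification that $R_nB_{\Pi,(j)}R_n^*=Q_j$, i.e., that the indexing of the true-poly-Bergman pieces matches the coordinates of $R_{0,n}^*$. Everything else is formal conjugation by a unitary.
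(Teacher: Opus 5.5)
Your proposal is correct and follows the paper's route. The paper gives no separate proof: it obtains Theorem \ref{C-P-Bj} from Theorem \ref{isPT} by conjugating with the unitary $R_n$, as in its commutative diagram, using $P=R_n^*\mathbf{R}P_T\mathbf{R}^*R_n$ and $B_{\Pi,(j)}=R_n^*\mathbf{R}Q_{\Pi,(j)}\mathbf{R}^*R_n$. Your explicit verification of $R_nB_{\Pi,(j)}R_n^*=Q_j$, via Theorem \ref{Relation} and the orthonormality of the Laguerre functions, fills in an identity the paper only asserts.
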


Recall that $\mathcal{D}_n^{\mathbb{C}I}$ is the $C^*$-subalgebra  of $\mathcal{D}_n^{1,n}$ consisting of all 
matrix-valued functions $M$ such that  $M(0)=M( \infty)$.

\begin{remark} According to Theorem \ref{CAToeplitz1}, the $C^*$-algebra $\mathcal{C}^*(\mathcal{T}_n^{0,0})$ is 
	isomorphic and isometric to $\mathcal{D}_n^{\mathbb{C}I}$, that is, 
	the $C^*$-algebra generated by all Toeplitz operators $T_{n,b}$, with symbols $b \in L_\infty^{\{0,\infty\}}(\mathbb{R}_+)$ 
	satisfying the condition $b^0=b^{+\infty}$, is a $C^*$-subalgebra of $\mathcal{C}^*(P;\{B_{\Pi,(j) }\}_{j=1}^n)$.
\end{remark}


\section{Reproducing kernels for the ranges of $P_T$ and $P$}\label{RK.}

We have seen that a $C^*$-algebra generated by Toeplitz operators acting on $\mathcal{A}_{n}^2(\Pi)$
can be obtained from a system of all-but-one orthogonal projections in generic position, where 
one of the orthoprojections is $P$, which plays a fundamental role in our study.
Now  $\mathcal{A}_{n}^2(\Pi)$ is a reproducing kernel Hilbert space; hence, all of its subspaces are as well.
In this section, we derive  integral representations of the reproducing kernel of the image of $P$,
and  exhibit their connection to some special functions.
The operator $P$ is constructed via $P_T$ and $P_{\gamma}$ by a unitary equivalence;
we also describe the images of these projections.
With this in mind, introduce the bounded continuous vector-valued function
\begin{equation}\label{def:Mn}
	M_n=(\sqrt{\gamma^{a_1}}, \dots, \sqrt{\gamma^{a_n}})^t: [0,\infty]\longrightarrow \mathbb{R}^n  .
\end{equation}
Note that $P_\gamma=M_n M_n^t$, and the Euclidean norm of $M_n(x)\in \mathbb{C}^n$ equals 1
for each $x\in[0,\infty]$. That is, $ M_n^tM_n=1$.

\begin{prop} 
	The following results hold:
	\begin{enumerate}
		\item The operator $P_{\gamma }$ is the orthogonal projection from $(L_2(\mathbb{R_+}))^n$ 
		onto the space of all functions of the form $g=aM_n$ with $a  \in L_2(\mathbb{R_+})$.
		\item The operator $P_{T }$ is the orthogonal projection from $(L_2(\Pi))^n$ onto the space of
		all vector-valued holomorphic functions of the form 
		\begin{align*}\label{intvec}
			h(z)&
			=\frac{1}{\sqrt{2\pi}}\int_{\mathbb{R_+}} e^{itz} \sqrt{2t} \, a(t) M_n(t)dt, \quad z=x+iy,
		\end{align*}
		where $a  \in L_2(\mathbb{R_+})$.
	\end{enumerate}
\end{prop}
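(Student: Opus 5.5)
For part 1 we use the factorization $P_\gamma = M_nM_n^t$ together with the normalization $M_n^tM_n=1$ on $[0,\infty]$. For $f=(f_1,\dots,f_n)^t\in (L_2(\mathbb{R}_+))^n$ we have pointwise
\[
(P_\gamma f)(x)=M_n(x)\,\bigl(M_n(x)^tf(x)\bigr)=a(x)M_n(x), \qquad a:=M_n^tf .
\]
By Cauchy--Schwarz, $|a(x)|\le \|M_n(x)\|\,\|f(x)\|=\|f(x)\|$, so $a\in L_2(\mathbb{R}_+)$. Hence $\operatorname{Im}P_\gamma$ is contained in $\{aM_n : a\in L_2(\mathbb{R}_+)\}$.

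Conversely, take $g=aM_n$ with $a\in L_2(\mathbb{R}_+)$. Then $g\in (L_2(\mathbb{R}_+))^n$, because $\|g(x)\|=|a(x)|$. Moreover
\[
P_\gamma g=M_n(M_n^tM_n)a=aM_n=g,
\]
so $g$ lies in the range. Since $P_\gamma I$ is already known to be an orthogonal projection (Corollary \ref{cor}, or directly from self-adjointness and $P_\gamma^2=M_n(M_n^tM_n)M_n^t=P_\gamma$), its range is exactly this set, which is therefore closed.

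For part 2 we transport part 1 back through the unitary $\mathbf{R}$. By \eqref{P-gamma}, $P_T=\mathbf{R}^*P_\gamma I\,\mathbf{R}$ on $(\mathcal{A}^2(\Pi))^n$. Extending $P_T$ by zero on the orthogonal complement (that is, composing with $\bigoplus B_{\Pi,1}=\mathbf{R}^*\mathbf{R}$), we obtain
\[
\operatorname{Im}P_T=\mathbf{R}^*\bigl(\operatorname{Im}P_\gamma\bigr)=\{\mathbf{R}^*(aM_n): a\in L_2(\mathbb{R}_+)\},
\]
and this identification uses that $\mathbf{R}^*$ restricted to $(L_2(\mathbb{R}_+))^n$ is an isometry onto $(\mathcal{A}^2(\Pi))^n$. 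Since $\mathbf{R}^*$ acts componentwise by $R_1^*$, it remains to compute $R_1^*$ from \eqref{Restrella} with $n=1$, where $N_1(2ty)=\ell_0(2ty)=e^{-ty}$:
\[
(R_1^*f)(x,y)=\frac{1}{\sqrt{2\pi}}\int_{\mathbb{R}_+}e^{ixt}\sqrt{2t}\,e^{-ty}f(t)\,dt=\frac{1}{\sqrt{2\pi}}\int_{\mathbb{R}_+}e^{itz}\sqrt{2t}\,f(t)\,dt .
\]
Applying this to each component $a\sqrt{\gamma^{a_k}}$ of $aM_n$ gives exactly the stated vector integral for $h$. Each component is holomorphic, being the image of $R_1^*$ in the Bergman space. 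One can also check this directly: for $y>0$ the integral converges absolutely (the integrand is dominated by $\sqrt{t}e^{-ty}|a(t)|$, which is in $L_1$ by Cauchy--Schwarz), so one may differentiate under the integral sign.

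The only delicate point, and the main obstacle, is bookkeeping rather than any real difficulty. One must interpret $P_T$ on $(L_2(\Pi))^n$ as $P_T\bigoplus B_{\Pi,1}$, and one must make sure the formula \eqref{Restrella} for $R_1^*$ is used on its natural domain, so that the pointwise integral agrees with the $L_2$-defined operator. For the second issue, the absolute convergence noted above settles it for every $a\in L_2(\mathbb{R}_+)$.
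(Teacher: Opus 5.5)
Your proof is correct and follows the paper's own argument. Part 1 uses the factorization $P_\gamma=M_nM_n^t$ with $M_n^tM_n=1$, and part 2 transports the range through $\mathbf{R}^*=\bigoplus R_1^*$, using \eqref{Restrella} with $\ell_0(2ty)=e^{-ty}$. Your additions only make explicit what the paper leaves implicit: the Cauchy--Schwarz bound giving $M_n^tf\in L_2(\mathbb{R}_+)$, the interpretation of $P_T$ on $(L_2(\Pi))^n$ as $\mathbf{R}^*P_\gamma\mathbf{R}$, and the absolute convergence of the integral for $y>0$.
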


\begin{proof}  
	To prove the first statement, let $f=(f_1, \dots, f_n)^t \in (L_2(\mathbb{R_+}))^n$. Thus,
	$$g:=P_\gamma f=  M_n (M_n^t f)=(M_n^t f)M_n=\widetilde{a} M_n,$$
	where $\widetilde{a}=M_n^t f=\sum_{k=1}^n \sqrt{\gamma^{a_k}} f_k$ belongs to $L_2(\mathbb{R_+})$. 
	On the other hand, if $a\in L_2(\mathbb{R_+})$, the function $g:=aM_n$ belongs to $(L_2(\mathbb{R_+}))^n$, and $$P_{\gamma}(g)=M_nM_n^taM_n=aM_n(M_n^tM_n)=aM_n=g.$$ 
	
	Thus,	$g\in \text{Im}\, P_\gamma$ if and only if there
	exists $a \in L_2(\mathbb{R_+})$ such that $g=aM_n$ almost everywhere.
	
	On the other hand, since $P_T=\mathbf{R}^*   P_{\gamma} \mathbf{R}$, 
	the image of $P_T$ consists of all functions of the form
	$h=\mathbf{R}^*g,$
	where $g=aM_n$ and $a  \in L_2(\mathbb{R_+})$. 
	Since $\mathbf{R}^*=\bigoplus_{k=1}^{n}R_1^*$, the second statement in the proposition  follows from  \eqref{Restrella}.
\end{proof}

\vspace*{.5cm}

\begin{theorem}\label{int-rep-PT}
	The orthogonal projection $P_T: (\mathcal{A}^2(\Pi))^n \rightarrow (\mathcal{A}^2(\Pi))^n$
	admits the integral representation
	$$(P_T \phi)(z)=\int_{\Pi} \phi(u,v)\, \mathbf{K} (z,w)  du dv, \quad w=u+iv,$$
	where
	$$\mathbf{K}(z,w)= \frac{1}{2\pi} \int_{\mathbb{R}_+}  e^{it(x-u)}\, 2t\, \ell_0(2ty) \, \ell_0(2tv) \, P_{\gamma}(t) dt,$$
	with $z=x+iy$ and $w=u+iv$.
\end{theorem}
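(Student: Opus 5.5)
Since $\mathbf{R}$ restricted to $(\mathcal{A}^2(\Pi))^n$ is unitary, $P_T=\mathbf{R}^*P_\gamma\mathbf{R}$ on $(\mathcal{A}^2(\Pi))^n$. Because $\mathbf{R}^*\mathbf{R}=\bigoplus B_{\Pi,1}$, the operator $\mathbf{R}^*P_\gamma\mathbf{R}$ is also defined on all of $(L_2(\Pi))^n$ and vanishes on the orthogonal complement. The plan is to write $\mathbf{R}$ and $\mathbf{R}^*$ explicitly as integral operators using \eqref{Rn} and \eqref{Restrella} with $n=1$, compose them with the matrix multiplication operator $P_\gamma$, and exchange the order of integration so that the kernel $\mathbf{K}(z,w)$ appears.

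For $n=1$ we have $N_1=\ell_0$. Formula \eqref{Rn} gives, componentwise,
\[
(\mathbf{R}\phi)(t)=\frac{\chi_+(t)}{\sqrt{2\pi}}\int_{\mathbb{R}}\int_{\mathbb{R}_+} e^{-itu}\sqrt{2t}\,\ell_0(2tv)\,\phi(u,v)\,dv\,du .
\]
Multiplying by the matrix $P_\gamma(t)$ and then applying \eqref{Restrella} componentwise yields
\[
(P_T\phi)(x,y)=\frac{1}{\sqrt{2\pi}}\int_{\mathbb{R}_+} e^{ixt}\sqrt{2t}\,\ell_0(2ty)\,P_\gamma(t)\,(\mathbf{R}\phi)(t)\,dt .
\]
Substituting the first formula into the second and formally applying Fubini combines the two factors $\sqrt{2t}$ into $2t$, and the constants into $\frac{1}{2\pi}$. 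The result is exactly $\int_\Pi \mathbf{K}(z,w)\phi(u,v)\,du\,dv$, with $\mathbf{K}$ as stated. Here the matrix $\mathbf{K}(z,w)$ acts on the vector $\phi(u,v)$; the order in which the product is written is harmless.

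The main obstacle is justifying the exchange of integrals, since the $u$-integral is a Fourier transform and is only $L_2$-convergent in general. I would first prove the formula for $\phi$ in a dense class, for example $\phi\in (L_1\cap L_2)(\Pi)^n$ with compact support in $\Pi$. For such $\phi$ with $v\in[v_1,v_2]\subset(0,\infty)$, the integrand is bounded by
\[
2t\,e^{-t(y+v)}\,|\phi(u,v)|,
\]
because $\ell_0(s)=e^{-s/2}$ and $\|P_\gamma(t)\|\le 1$. This bound is integrable over $\mathbb{R}_+\times\Pi$, since $\int_0^\infty 2t\,e^{-t(y+v_1)}\,dt<\infty$. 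Hence Fubini applies and the kernel integral converges absolutely for every $z\in\Pi$.

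To pass to general $\phi$, I would use the estimate
\[
\|\mathbf{K}(z,\cdot)\|_{L_2}^2\le C\int_{\mathbb{R}_+} t\,e^{-2ty}\,dt<\infty,
\]
which follows from Plancherel in $u$ and integration in $v$. So $\mathbf{K}(z,\cdot)\in L_2(\Pi)^{n\times n}$, and the integral is a continuous functional of $\phi$ for each fixed $z$. Both sides are continuous in $\phi$: the left side because point evaluation on $\mathcal{A}^2(\Pi)$ is bounded, the right side by the estimate above. Density then gives the formula for all $\phi\in(L_2(\Pi))^n$, and in particular on $(\mathcal{A}^2(\Pi))^n$.
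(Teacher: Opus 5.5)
Your proposal is correct and follows the same route as the paper: write $P_T=\mathbf{R}^*P_\gamma\mathbf{R}$, insert the explicit formulas \eqref{Rn} and \eqref{Restrella} with $n=1$, and interchange the order of integration to read off $\mathbf{K}(z,w)$. The paper does this interchange only formally. Your Fubini bound on compactly supported $\phi$, together with the estimate $\|\mathbf{K}(z,\cdot)\|_{L_2}^2\le C\int_{\mathbb{R}_+}t\,e^{-2ty}\,dt$ and the density/continuity extension to all of $(L_2(\Pi))^n$, correctly supplies the justification the paper omits.
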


\begin{proof}  
	Let $\mathbf{I}$ be the $n\times n$ identity matrix. 
	Then $\mathbf{R}= \bigoplus_{k=1}^{n} R_1=R_1 \mathbf{I}$. 
	According to (\ref{P-gamma}) we have $P_T=  \mathbf{R}^* P_\gamma \mathbf{R}$.
	Explicit formulae for the values of $\mathbf{R}$ and $\mathbf{R}^*$ follow from (\ref{Rn}) and (\ref{Restrella}), 
	they are given by
	\begin{equation*}
		(\mathbf{R} \phi)(t)= \frac{\chi_+(t)}{\sqrt{2\pi}} \int_{\mathbb{R}_+} 
		\int_{\mathbb{R}} e^{-itu}  \sqrt{2t}\, \ell_0(2tv)\, \phi(u,v)\, du dv, \quad \phi \in (\mathcal{A}^2(\Pi))^n
	\end{equation*}	
	and
	\begin{equation*}
		(\mathbf{R}^* f)(x,y)= \frac{1}{\sqrt{2\pi}} \int_{\mathbb{R}_+} e^{itx}
		\sqrt{2t}\, \ell_0(2ty)\, f(t) dt, \quad f \in (L_2(\mathbb{R_+}))^n.
	\end{equation*}	
	
	Take $\phi=(\phi_1, \dots,\phi_n)^t\in (\mathcal{A}^2(\Pi))^n$. Then, 
	\begin{align*}
		(P_T\phi)(x,y)
		&=\left(\left[ \mathbf{R}^* P_\gamma  \mathbf{R}\right]\phi\right)(x,y)\\
		&=\frac{1}{\sqrt{2\pi}} \int_{\mathbb{R}_+} e^{itx}
		\sqrt{2t}\, \ell_0(2ty)\, P_{\gamma}(t)\, (\mathbf{R}\phi)(t)\, dt \\
		&= \frac{1 }{2\pi}  \int_{\mathbb{R_+}} e^{itx}   \int_{\mathbb{R}_+}  \int_{\mathbb{R}}
		e^{-itu} \, 2t \,	\ell_0(2ty) \ell_0(2tv) P_{\gamma}(t)    \phi(u,v) dudvdt. 
	\end{align*}
	The integral representation of $\mathbf{K}(z,w)$  immediately follows from last integral.
\end{proof}

\normalsize

\vspace*{.5cm}
An interesting fact is that the image of $P_T$ involves the digamma function, which in turn, 
is related to the Harmonic numbers given by
$H_n=\sum\limits_{k=1}^{n} 1/k$, with $n\in \mathbb{N}.$
Recall that the digamma function is defined by 
$$\psi(z)=\frac{\Gamma'(z)}{\Gamma(z)}
=-\gamma+\sum_{n=0}^{\infty} \left( \frac{1}{n+1}-\frac{1}{n+z} \right), \ \ z\notin \{0, -1,-2,-3, \dots\} $$
\noindent
where $\Gamma $ is the Gamma function
and $\gamma=-\psi(1)$ is the Euler-Mascheroni constant.
The function $\psi$ satisfies $\psi(n)=H_{n-1}-\gamma$, where $H_0=0$.
Thus, Harmonic numbers are values of the harmonic function
$$H(z)=\psi(z+1)+\gamma.$$
Since $\psi(z+1)=\psi(z)+1/z$, one can easily  prove that $H(z-1)=H(z)-1/z$.
In addition, the Nielsen's beta function also emerges in this framework, it is defined by the formula
$$\beta(z)=\int_{0}^{\infty} \frac{e^{-zt}}{1+e^{-t}} dt 
= \sum_{n=0}^{\infty} \frac{(-1)^{n}}{z+n}, \ \ \text{Re} \ z>0.$$
This function satisfies
\begin{align*}
	\beta(z)
	&=\frac{1}{2}\left(\psi\left(\frac{z+1}{2}\right)-\psi\left(\frac{z}{2}\right)\right) \\
	&=\frac{1}{2}\left(H\left(\frac{z+1}{2}\right)-H\left(\frac{z}{2}\right)\right) + \frac{1}{z} -\frac{1}{z+1}.
\end{align*}

Some computations below require the following result
$$\int_{0}^{\infty} e^{-pt} (1-e^{-t/{\alpha}})^{\nu-1} dt=\alpha B(\alpha p,\nu), 
\quad \text{Re}\, \alpha, \ \text{Re}\, \nu, \ \text{Re}\, p >0,$$
where $B(z,w)$ is the usual beta function. In particular, we introduce the 
following function defined by the Laplace transform of the function $h(t)=(1-e^{-2t})^{1/2}$:
$$J(p):=\int_{0}^{\infty} e^{-pt} (1-e^{-2t})^{1/2} dt=\frac{1}{2} B(p/2,3/2)
=\frac{\sqrt{\pi}}{4} \frac{\Gamma(p/2)}{\Gamma\left(\frac{p}{2}+\frac{3}{2}\right)}.$$

\begin{lemma}
	The Laplace transform of function $th(t)$ is given by
	\begin{equation}\label{integral-Kjn}
		\int_{0}^{\infty}  e^{-pt} \, t (1-e^{-2t})^{1/2} dt =- \frac{d}{dp} J(p)
		=J(p) \left( \beta(p)+\frac{1}{p+1}\right).
	\end{equation}
\end{lemma}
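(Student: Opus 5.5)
The plan has two parts: differentiate under the integral sign for the first equality, and take the logarithmic derivative of the closed form of $J$ for the second.

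\textbf{First equality.} For $\operatorname{Re} p>0$ the integrand $e^{-pt}(1-e^{-2t})^{1/2}$ and its $p$-derivative $-t e^{-pt}(1-e^{-2t})^{1/2}$ are dominated, locally uniformly in $p$, by $t e^{-ct}$ with $c>0$. So $J$ is holomorphic there and may be differentiated under the integral sign, which gives
\[
-\frac{d}{dp}J(p)=\int_0^\infty e^{-pt}\, t(1-e^{-2t})^{1/2}\,dt .
\]

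\textbf{Second equality.} Start from the closed form $J(p)=\frac{\sqrt{\pi}}{4}\,\Gamma(p/2)/\Gamma(p/2+3/2)$ recorded above. Taking the logarithmic derivative,
\[
-\frac{J'(p)}{J(p)}=\frac12\left(\psi\!\left(\tfrac{p}{2}+\tfrac32\right)-\psi\!\left(\tfrac p2\right)\right).
\]
Apply the recurrence $\psi(z+1)=\psi(z)+1/z$ with $z=\frac{p+1}{2}$; this turns $\psi\!\left(\frac{p+3}{2}\right)$ into $\psi\!\left(\frac{p+1}{2}\right)+\frac{2}{p+1}$. Hence
\[
-\frac{J'(p)}{J(p)}=\frac12\left(\psi\!\left(\tfrac{p+1}{2}\right)-\psi\!\left(\tfrac p2\right)\right)+\frac{1}{p+1}.
\]
The identity $\beta(p)=\frac12\bigl(\psi\!\left(\frac{p+1}{2}\right)-\psi\!\left(\frac p2\right)\bigr)$ stated above then gives $-J'(p)=J(p)\bigl(\beta(p)+\frac1{p+1}\bigr)$.

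\textbf{Main obstacle.} The only delicate point is confirming the closed form for $J$, which the paper quotes via the beta integral. The substitution $s=e^{-2t}$ gives
\[
J(p)=\tfrac12\int_0^1 s^{p/2-1}(1-s)^{1/2}\,ds=\tfrac12 B\!\left(\tfrac p2,\tfrac32\right).
\]
Since $\Gamma(3/2)=\sqrt\pi/2$, this equals $\frac{\sqrt\pi}{4}\Gamma(p/2)/\Gamma(p/2+3/2)$, as required. It is also worth checking that the $\beta$–$\psi$ identity holds for $\operatorname{Re}p>0$ and not just on the positive reals; this follows by analytic continuation or directly from the series definitions.
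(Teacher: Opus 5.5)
Your proposal is correct and follows the paper's proof. The paper also differentiates the Gamma-quotient form of $J$ (by the quotient rule rather than a logarithmic derivative) to get $J(p)\cdot\frac12\bigl(\psi(\frac p2+\frac32)-\psi(\frac p2)\bigr)$, and then finishes with $\psi(z+1)=\psi(z)+1/z$ and the $\beta$--$\psi$ identity. Your justification of differentiating under the integral sign and your check of the closed form for $J$ via $s=e^{-2t}$ are details the paper leaves implicit.
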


\begin{proof}
	Compute
	\begin{align*}
		\int_{0}^{\infty} e^{-pt}\, t(1-e^{-2t})^{1/2} dt
		&= - \frac{d}{dp} J(p) \\
		&= -\frac{\sqrt{\pi}}{4} \frac{1}{2} 
		\frac{\Gamma\left(\frac{p}{2}+\frac{3}{2}\right) \Gamma'\left(\frac{p}{2}\right)-
			\Gamma\left(\frac{p}{2}\right) \Gamma'\left(\frac{p}{2}+\frac{3}{2}\right)}
		{\left(\Gamma\left(\frac{p}{2}+\frac{3}{2}\right)\right)^2} \\
		&= -\frac{\sqrt{\pi}}{8} 
		\frac{\Gamma\left(\frac{p}{2}\right) \psi\left(\frac{p}{2}\right) -
			\Gamma\left(\frac{p}{2}\right) \psi\left(\frac{p}{2}+\frac{3}{2}\right)}
		{\Gamma\left(\frac{p}{2}+\frac{3}{2}\right)} \\
		&= \frac{\sqrt{\pi}}{4} 
		\frac{\Gamma\left(\frac{p}{2}\right)}{\Gamma\left(\frac{p}{2}+\frac{3}{2}\right)} 
		\cdot \frac{1}{2}
		\left( \psi\left(\frac{p}{2}+\frac{3}{2}\right)-\psi\left(\frac{p}{2}\right)\right).
	\end{align*}
	Formula (\ref{integral-Kjn}) follows from the identity $\psi(z+1)=\psi(z)+1/z$.
\end{proof}

Introduce now the functions
\begin{equation*}\label{Kernel-m-z-w}
	G_m(z,w)=\frac{-1}{\pi\left[z-\overline{w}+m i\right]^2}, \quad m\in \mathbb{Z}.
\end{equation*}

\begin{theorem} 
	Let $z=x+iy$ and $w=u+iv$. The reproducing kernel $\mathbf{K}(z,w)$ of the image of $P_T$ is given by
	\begin{equation*}\label{kernerG}
		\mathbf{K}(z,w)= (K_{jk}(z,w))_{j,k=1,\dots,n},
	\end{equation*}
	where
	\begin{equation*}
		K_{nn}(z,w)=G_{2(n-1)}(z,w) = \frac{-1}{\pi\left[z-\overline{w}+2(n-1)i \right]^2},
	\end{equation*}
	\begin{equation*}
		K_{jk}(z,w)= -G_{j+k}(z,w)+G_{j+k-2}(z,w),
		\quad j, k=1,\dots,n-1,
	\end{equation*}
	and
	\begin{align*}
		K_{jn}(z,w)
		&= \frac{1}{\pi} J(p)  \left( \beta(p)+\frac{1}{p+1}\right) \\
		&= \frac{J(p)}{2\pi}\left[ H\left(\frac{p+1}{2}\right)-H\left(\frac{p}{2}\right) + \frac{2}{p} \right]
	\end{align*}
	where $j=1, \dots, n-1$ and $p=-i(z-\overline{w})+j+(n-2)$.
\end{theorem}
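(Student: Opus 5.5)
The plan is to start from the integral representation of Theorem \ref{int-rep-PT} and compute each entry of the matrix $\mathbf{K}(z,w)$ in closed form. Since $P_\gamma = M_n M_n^t$, the $(j,k)$ entry of $P_\gamma(t)$ is $\sqrt{\gamma^{a_j}(t)\gamma^{a_k}(t)}$. Moreover $\ell_0(2ty)\ell_0(2tv)=e^{-t(y+v)}$, and with $z=x+iy$, $w=u+iv$ we have $e^{it(x-u)}e^{-t(y+v)} = e^{it(z-\overline{w})}$. Hence the first step is to rewrite
$$K_{jk}(z,w)=\frac{1}{\pi}\int_{\mathbb{R}_+} t\, e^{it(z-\overline{w})}\sqrt{\gamma^{a_j}(t)\gamma^{a_k}(t)}\,dt .$$
The basic computation is then
$$\frac1\pi\int_0^\infty t\,e^{-qt}\,dt=\frac{1}{\pi q^2}, \qquad q=-i(z-\overline{w})+m .$$
Here $\operatorname{Re} q = y+v+m>0$ for $m\geq 0$. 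Since $q^2=-(z-\overline{w}+mi)^2$, this integral equals $G_m(z,w)$.

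Next I would split into cases using the explicit formulas \eqref{eq.gk} and \eqref{eq:gn}.

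\emph{Case $j=k=n$.} Here $\gamma^{a_n}(t)=e^{-2(n-1)t}$, so the computation above with $m=2(n-1)$ gives $K_{nn}=G_{2(n-1)}$.

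\emph{Case $j,k\le n-1$.} Here
$$\sqrt{\gamma^{a_j}\gamma^{a_k}}=e^{-(j+k)t}(e^{2t}-1)=e^{-(j+k-2)t}-e^{-(j+k)t},$$
and linearity gives $K_{jk}=G_{j+k-2}-G_{j+k}$.

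\emph{Case $j\le n-1$, $k=n$.} Here
$$\sqrt{\gamma^{a_j}\gamma^{a_n}}=e^{-jt}e^{t}(1-e^{-2t})^{1/2}e^{-(n-1)t}=e^{-(j+n-2)t}(1-e^{-2t})^{1/2}.$$
Consequently
$$K_{jn}=\frac1\pi\int_0^\infty e^{-pt}\,t\,(1-e^{-2t})^{1/2}\,dt, \qquad p=-i(z-\overline{w})+j+n-2,$$
and formula \eqref{integral-Kjn} yields $\frac1\pi J(p)\bigl(\beta(p)+\frac{1}{p+1}\bigr)$. The entries $K_{nj}$ follow by the same computation, since $P_\gamma$ is symmetric.

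For the second expression of $K_{jn}$, I would substitute the stated identity
$$\beta(p)=\tfrac12\bigl(H(\tfrac{p+1}{2})-H(\tfrac p2)\bigr)+\tfrac1p-\tfrac{1}{p+1}.$$
Then $\beta(p)+\frac1{p+1}=\frac12\bigl[H(\frac{p+1}{2})-H(\frac p2)+\frac2p\bigr]$, which gives the factor $J(p)/(2\pi)$.

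The main obstacle is that the preceding lemma and the formula for $J(p)$ were derived for real positive $p$, whereas here $p$ is complex with $\operatorname{Re}p=y+v+j+n-2>0$. I would handle this by analytic continuation. Both sides of \eqref{integral-Kjn} are holomorphic in the half-plane $\operatorname{Re}p>0$: the Laplace integral converges absolutely and locally uniformly there, and $\Gamma$, $\psi$, $\beta$ are holomorphic there. The two sides agree on $(0,\infty)$, so they coincide on the whole half-plane. The same remark justifies $\int_0^\infty t e^{-qt}\,dt=q^{-2}$ for complex $q$ with positive real part.

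Finally, Fubini in Theorem \ref{int-rep-PT} is legitimate for fixed $z,w\in\Pi$, since every integrand is dominated by $t\,e^{-t(y+v)}$. This identifies $\mathbf{K}$ as the reproducing kernel of the range of $P_T$.
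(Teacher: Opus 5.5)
Your proposal is correct and follows essentially the same route as the paper. You substitute $P_\gamma=M_nM_n^t$ and $\ell_0(2ty)\ell_0(2tv)=e^{-t(y+v)}$ into the kernel of Theorem \ref{int-rep-PT}, split into the three cases using \eqref{eq.gk} and \eqref{eq:gn}, and finish the mixed entries with \eqref{integral-Kjn}. Your analytic-continuation remark for complex $p$ (and for $\int_0^\infty te^{-qt}\,dt=q^{-2}$) makes explicit what the paper leaves implicit; note that the beta-integral formula defining $J$ is in fact already stated in the paper for $\operatorname{Re}p>0$.
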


\begin{proof}   From Theorem \ref{int-rep-PT}, the reproducing kernel of the image of $P_T$
	equals
	\begin{equation*}
		\mathbf{K}(z,w)= (K_{jk}(z,w))_{j,k=1,\dots,n},
	\end{equation*}
	where
	$$K_{jk}(z,w)= \frac{1}{2\pi}\int_{\mathbb{R}_+} e^{it(x-u)} 
	2t  \ell_{0}(2ty)\ell_{0}(2tv) \sqrt{\gamma^{a_j}(t)\gamma^{a_k}(t)} dt,$$
	and the functions $\gamma^{a_k}$ are defined in (\ref{eq.gk}) and (\ref{eq:gn}). 
	Recall that $\ell_{0}(t)=\chi_+(t) e^{-t/2}$. 
	For $j=k=n$, we have that
	\begin{align*}
		K_{nn}(z,w)
		&= \frac{1}{\pi} \int_{\mathbb{R}_+}t e^{it(x-u)-ty-tv} e^{-2(n-1)t} \, dt \\
		&= \frac{1}{\pi} \int_{\mathbb{R}_+}t e^{it[z-\overline{w}+2(n-1)i]} \, dt, \quad {\text{Im}} (z-\overline{w}) >0\\
		&= \frac{-1}{\pi\left[z-\overline{w}+2(n-1)i \right]^2}.
	\end{align*}
	On the other hand, for $j,k=1,\ldots,n-1$, we have 
	\begin{align*}
		K_{jk}(z,w)
		&= \frac{1}{\pi}\int_{\mathbb{R}_+}t e^{it(x-u)-ty-tv} \sqrt{e^{-2jt}[-1+e^{2t}] e^{-2kt}[-1+e^{2t}] } dt\\
		&=\frac{1}{\pi}\int_{\mathbb{R}_+}t e^{it[z-\overline{w}+ji+ki]} [-1+e^{2t}] dt\\
		&=\frac{-1}{\pi}\int_{\mathbb{R}_+}t e^{it[z-\overline{w}+ji+ki]}dt +
		\frac{1}{\pi}\int_{\mathbb{R}_+}t e^{it[z-\overline{w}+ji+ki-2i]} dt.  \\
		&=\frac{1}{\pi[z-\overline{w} +(j+k)i]^2} - \frac{1}{\pi[z-\overline{w} +(j+k-2)i]^2}. 
	\end{align*} 
	
	Finally, let $j\in \{1,\dots,n-1\}$ and introduce $q=i(u-x)+y+v+j+n-1$.
	Note that $\text{Re}\, q > 2$. Then
	
	\begin{align*}
		K_{jn}(z,w)
		&= \frac{1}{\pi} \int_{\mathbb{R}_+} te^{it(x-u)-ty-tv}  \sqrt{\gamma^{a_j}(t)\gamma^{a_n}(t)} \, dt \\
		&= \frac{1}{\pi} \int_{\mathbb{R}_+} t e^{-t[i(u-x)+y+v+j+n-1]} \sqrt{-1+e^{2t}} dt \\
		&= \frac{1}{\pi} \int_{\mathbb{R}_+} t  e^{-pt}  \sqrt{1-e^{-2t}} dt, \quad p=q-1,
	\end{align*}
	where $p=-i(z-\overline{w})+j+(n-2)$. Formula (\ref{integral-Kjn}) finishes the proof.
\end{proof}

\vspace{.6cm}
In order to have an integral representation of $P$, recall that the image of $P_\gamma $ consists of 
all functions of the form $g=aM_n$, where $a\in L_2(\mathbb{R_+})$ and $M_n$ is given in (\ref{def:Mn}). 
Thus, the image of $P$ consists of all functions $h=R_n^*(aM_n)$, where $a \in L_2(\mathbb{R_+})$, that is, 
$$h(z)=\frac{1}{\sqrt{2\pi}}\int_{\mathbb{R}_+} e^{i xt} \sqrt{2t}\,  a(t)[N_n(2ty)]^t M_n(t)dt.$$

\begin{theorem}\label{Int-rep-P}
	The orthogonal projection $P:\mathcal{A}_n^2(\Pi) \rightarrow \mathcal{A}_n^2(\Pi)$
	admits the integral representation
	$$(P\phi)(z)=\int_{\Pi} \phi(u,v)\, K^{\gamma} (z,w) du dv, \quad w=u+iv,$$
	where
	\begin{equation}\label{Kgamma}
		K^{\gamma}(z,w)= \frac{1}{\pi} \int_{\mathbb{R}_+}  e^{it(x-u)} 
		t \, N_n(2ty)^t P_{\gamma}(t)  N_n(2tv) dt, \quad z=x+iy.
	\end{equation}
\end{theorem}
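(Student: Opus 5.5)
The plan is to mirror the proof of Theorem \ref{int-rep-PT}, replacing $\mathbf{R}$ by $R_n$. By the definition \eqref{def-P} and the relation $\mathbf{R}P_T\mathbf{R}^*=P_\gamma I$ from \eqref{P-gamma}, we have
\[
P=R_n^*\,P_\gamma I\,R_n \quad \text{on } \mathcal{A}_n^2(\Pi).
\]
Indeed, $P=R_n^*(\mathbf{R}P_T\mathbf{R}^*)R_n$, and the middle factor is exactly the matrix multiplication operator by $P_\gamma(t)$ on $(L_2(\mathbb{R}_+))^n$. So it suffices to compose the three explicit operators.

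First, for $\phi\in\mathcal{A}_n^2(\Pi)$, write $(R_n\phi)(t)$ using \eqref{Rn}:
\[
(R_n\phi)(t)=\frac{\chi_+(t)}{\sqrt{2\pi}}\int_{\mathbb{R}}\int_{\mathbb{R}_+} e^{-itu}\sqrt{2t}\,N_n(2tv)\,\phi(u,v)\,dv\,du ,
\]
which is a vector in $\mathbb{C}^n$. Next, multiply pointwise by the $n\times n$ matrix $P_\gamma(t)$. Then apply \eqref{Restrella}:
\[
(R_n^*f)(x,y)=\frac{1}{\sqrt{2\pi}}\int_{\mathbb{R}_+} e^{ixt}\sqrt{2t}\,N_n(2ty)^t f(t)\,dt ,
\]
with $f=P_\gamma R_n\phi$. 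Combining the constants $\frac{1}{2\pi}\cdot 2t=\frac{t}{\pi}$ and interchanging the $t$-integral with the $(u,v)$-integral yields
\[
(P\phi)(x,y)=\int_{\Pi}\phi(u,v)\,\frac{1}{\pi}\int_{\mathbb{R}_+}e^{it(x-u)}\,t\,N_n(2ty)^tP_\gamma(t)N_n(2tv)\,dt\,du\,dv .
\]
This is exactly \eqref{Kgamma}.

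The main obstacle is justifying the interchange of integrals, since \eqref{Rn} is in general an $L_2$-limit (Fourier transform) rather than an absolutely convergent integral. I would proceed in two steps. First, take $\phi$ in a dense subset of $\mathcal{A}_n^2(\Pi)$, for instance $\phi=R_n^*g$ with $g\in (C_c(0,\infty))^n$. For such $\phi$, together with the truncation $t\in[\epsilon,1/\epsilon]$, Fubini applies. For fixed $z$ the kernel is absolutely integrable in $t$, because $\ell_k(2ty)$ contains the factor $e^{-ty}$ times a polynomial and $\|P_\gamma(t)\|\le 1$. Moreover, $K^\gamma(z,\cdot)$ lies in $L_2(\Pi)$; this follows because it equals $R_n^*$ applied to a vector function of $t$ that is square integrable for fixed $z$. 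Consequently the map $\phi\mapsto\int_\Pi \phi\,K^\gamma(z,\cdot)$ is a continuous functional. Second, since point evaluation is continuous on $\mathcal{A}_n^2(\Pi)$, the identity extends from the dense subset to all $\phi$ by continuity.
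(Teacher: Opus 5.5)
Your proposal follows the paper's proof. You write $P=R_n^*P_\gamma I R_n$, compose the explicit formulas \eqref{Rn} and \eqref{Restrella}, combine $\frac{1}{2\pi}\cdot 2t=\frac{t}{\pi}$, and swap the $t$- and $(u,v)$-integrals, which the paper does only formally.

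Your density-and-continuity justification of that swap is a genuine addition, with two small corrections. For the Fubini step, take $g\in (C_c^\infty(0,\infty))^n$ so that $\phi=R_n^*g\in L_1(\Pi)$, since mere continuity of $g$ does not give integrability in $u$. Also, it is $\overline{K^\gamma(z,\cdot)}$, not $K^\gamma(z,\cdot)$, that equals $R_n^*$ applied to an $L_2$ vector function.
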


\begin{proof}  
	We have $P=  R_n^* P_\gamma R_n$. Take $\phi \in \mathcal{A}_n^2(\Pi)$,
	then
	\begin{equation*}
		(R_{n} \phi)(t)= \frac{\chi_+(t)}{\sqrt{2\pi}} \int_{\mathbb{R}_+} 
		\int_{\mathbb{R}} e^{-itu} \phi(u,v) \sqrt{2t} N_n(2tv) du dv, 
	\end{equation*}	
	and
	\begin{align*}
		(P\phi)(x,y)
		&= (R_n^* P_\gamma R_n \phi)(x,y)  \\
		&= \frac{1 }{\sqrt{2\pi}}\int_{\mathbb{R}_+} e^{itx} \sqrt{2t} N_n(2ty)^t P_{\gamma}(t) (R_n\phi)(t) dt\\
		&=\frac{1 }{2\pi}\int_{\mathbb{R}_+} e^{itx} 2t \int_{\mathbb{R}_+}  
		\int_{\mathbb{R}} e^{-itu} \phi(u,v) N_n(2ty)^t P_{\gamma}(t)  N_n(2tv) du dv dt\\
		&=\int_{\mathbb{R}_+}\int_{\mathbb{R}} \phi(u,v) \frac{1}{2\pi} 
		\int_{\mathbb{R}_+}  e^{it(x-u)} 2t N_n(2ty)^t P_{\gamma}(t)  N_n(2tv)dt du dv \\
		&=\int_{\mathbb{R}_+}\int_{\mathbb{R}} \phi(u,v)K^{\gamma}(z,w)\, du dv.
	\end{align*}
\end{proof}

\vspace*{.5cm}
It is easy to see that
$K^{\gamma}(z,w)=\sum_{j=1}^n\sum_{k=1}^n K^\gamma_{jk}(z,w)$,
where 
\begin{align*}
	K^{\gamma}_{jk}(z,w)
	&=\frac{1}{\pi}\int_{\mathbb{R}_+} e^{it(x-u)} t \sqrt{\gamma^{a_j}(t)\gamma^{a_k}(t)} \ell_{j-1}(2ty) \ell_{k-1}(2tv)dt. 
\end{align*}
Even though this integral representation of the reproducing kernel seems to be fairly simple,
a cumbersome computation of the integral in terms of the coefficients of Laguerre polynomials 
reveals no substantial information of the reproducing kernel. 
However, $P$ is unitarily equivalent to $P_T$, thus  the reproducing kernel  $K(z,w)$ theoretically contains all the information encoded in $K^{\gamma}(z,w)$.
We proceed to compute  another integral representation of $K^{\gamma}$.
Introduce the functions
$$\phi_m(t,y)= \sum_{j=1}^{m}  e^{-(j-1)t} \ell_{j-1}(2ty),$$
where $\ell_k$ is given in \eqref{elek} for $k=0,1,2,\dots$.

\begin{theorem}\label{K-gamma-2}
	The reproducing kernel $K^{\gamma}(z,w)$ is given by
	\begin{equation*}
		K^{\gamma}(z,w)=\frac{1}{\pi} \int_{\mathbb{R}_+} e^{it(x-u)} \, t \, 
		(\phi_n(t,y), \, \phi_{n-1}(t,y)) 
		\begin{pmatrix}
			1 & \phi(t) \\
			\phi(t) & [\phi(t)]^2
		\end{pmatrix}
		\begin{pmatrix}
			\phi_n(t,v) \\  \phi_{n-1}(t,v) 
		\end{pmatrix} dt,
	\end{equation*}
	where $\phi(t)=-1+\sqrt{1-e^{-2t}}$, $z=x+yi$ and $w=u+vi$.
	%
\end{theorem}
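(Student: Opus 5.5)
The plan is to start from the formula \eqref{Kgamma} of Theorem \ref{Int-rep-P} and rewrite the scalar integrand $N_n(2ty)^t P_\gamma(t) N_n(2tv)$ in terms of $\phi_n$ and $\phi_{n-1}$. Since $P_\gamma = M_nM_n^t$, this integrand factors as
\[
N_n(2ty)^t P_\gamma(t) N_n(2tv) = \bigl(M_n(t)^tN_n(2ty)\bigr)\bigl(M_n(t)^tN_n(2tv)\bigr).
\]
It therefore suffices to prove the single scalar identity
\[
M_n(t)^t N_n(2ty)=\phi_n(t,y)+\phi(t)\,\phi_{n-1}(t,y).
\]
Once this holds, the product of the two factors equals
\[
(\phi_n(t,y),\phi_{n-1}(t,y))\begin{pmatrix}1&\phi\\ \phi&\phi^2\end{pmatrix}(\phi_n(t,v),\phi_{n-1}(t,v))^t,
\]
because that matrix is exactly $(1,\phi)^t(1,\phi)$. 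Substituting into \eqref{Kgamma} then gives the stated formula directly.

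To prove the scalar identity, write out the entries of $M_n$ using \eqref{eq.gk} and \eqref{eq:gn}. For $k\le n-1$,
\[
\sqrt{\gamma^{a_k}(t)}=e^{-kt}\sqrt{e^{2t}-1}=e^{-(k-1)t}\sqrt{1-e^{-2t}},
\]
and $\sqrt{\gamma^{a_n}(t)}=e^{-(n-1)t}$. Hence
\[
M_n^tN_n(2ty)=\sqrt{1-e^{-2t}}\sum_{k=1}^{n-1}e^{-(k-1)t}\ell_{k-1}(2ty)+e^{-(n-1)t}\ell_{n-1}(2ty),
\]
which is $\sqrt{1-e^{-2t}}\,\phi_{n-1}(t,y)+\bigl(\phi_n(t,y)-\phi_{n-1}(t,y)\bigr)$. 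Regrouping gives $\phi_n+\bigl(-1+\sqrt{1-e^{-2t}}\bigr)\phi_{n-1}=\phi_n+\phi\,\phi_{n-1}$, as required.

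The only step needing care is this bookkeeping of exponents: the shift between $e^{-kt}$ and $e^{-(k-1)t}$, and the separate form of the $n$-th entry, which is what forces $\phi_n-\phi_{n-1}$ to appear. Interchanging the integrals is not an issue, since that was already handled in Theorem \ref{Int-rep-P}.
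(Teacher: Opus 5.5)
Your proposal is correct and follows essentially the paper's proof: factor $P_\gamma=M_nM_n^t$, use $\sqrt{\gamma^{a_k}(t)}=e^{-(k-1)t}\sqrt{1-e^{-2t}}$ for $k\le n-1$ together with $e^{-(n-1)t}\ell_{n-1}(2ty)=\phi_n(t,y)-\phi_{n-1}(t,y)$, and substitute into \eqref{Kgamma}. Your version is slightly cleaner than the paper's: you identify the single linear factor $M_n(t)^tN_n(2ty)=\phi_n(t,y)+\phi(t)\phi_{n-1}(t,y)$ and recognize the matrix as $(1,\phi)^t(1,\phi)$, whereas the paper expands the full product and then asserts the quadratic-form identity without spelling it out.
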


\begin{proof}  
	We write $M_n=(M_n'^t,\sqrt{\gamma^{a_n}})^t$ and $N_n=(N_{n-1}^t, \ell_{n-1})^t$, where
	$M_{n}'=(\sqrt{\gamma^{a_1}}, \dots, \sqrt{\gamma^{a_{n-1}}})^t$.
	Thus, by (\ref{eq.gk}),
	$$N_{n-1}(2ty)^t M_{n}'(t)= \sqrt{1-e^{-2t}} \phi_{n-1}(t,y).$$
	According to (\ref{Kgamma}), we calculate
	\begin{align*}
		k&:=N_n(2ty)^t P_{\gamma}(t)  N_n(2tv) \\
		& = [N_n(2ty)^t M_n(t)]\, [N_n(2tv)^t M_n(t)] \\
		&= [N_{n-1}(2ty)^t M_{n}'(t) +  \ell_{n-1}(2ty) \sqrt{\gamma^{a_n}(t)} ] \cdot \\
		& \ \ \ \ [N_{n-1}(2tv)^t M_{n}'(t) +  \ell_{n-1}(2tv) \sqrt{\gamma^{a_n}(t)} ] \\
		&= (1-e^{-2t}) \phi_{n-1}(t,y) \phi_{n-1}(t,v) +   \ell_{n-1}(2ty)   \ell_{n-1}(2tv)  \gamma^{a_n}(t)+ \\
		& \ \ \ \  \sqrt{1-e^{-2t}}  \sqrt{\gamma^{a_n}(t)} [ \phi_{n-1}(t,y)  \ell_{n-1}(2tv) +  \phi_{n-1}(t,v)  \ell_{n-1}(2ty) ].
	\end{align*}
	Recall that $\gamma^{a_n}(t)=e^{-2(n-1)t}$, moreover $
	e^{-(n-1)t}\ell_{n-1}(2ty)=\phi_n(t,y)-\phi_{n-1}(t,y)$.
	Then
	\begin{align*}
		k
		= & (1-e^{-2t}) \,  \phi_{n-1}(t,y)\phi_{n-1}(t,v) + \\
		& [\phi_n(t,y)-\phi_{n-1}(t,y)] [\phi_n(t,v)-\phi_{n-1}(t,v)] + \\
		&    \sqrt{1-e^{-2t})}  \,  [ \phi_{n-1}(t,y) \{ \phi_n(t,v)-\phi_{n-1}(t,v) \} + \phi_{n-1}(t,v) \{ \phi_n(t,y)-\phi_{n-1}(t,y) \} ].
	\end{align*}
	The  last form of $k$ is a quadratic expression with respect to 
	$\phi_n(t,y)$, $\phi_{n-1}(t,y)$,  $\phi_n(t,v)$ and $\phi_{n-1}(t,v)$.
	One can  easily see that
	$$N_n(2ty)^t P_{\gamma}(t)  N_n(2tv)=
	(\phi_n(t,y), \, \phi_{n-1}(t,y)) 
	\begin{pmatrix}
		1 & \phi(t) \\
		\phi(t) & [\phi(t)]^2
	\end{pmatrix}
	\begin{pmatrix}
		\phi_n(t,v) \\  \phi_{n-1}(t,v) 
	\end{pmatrix},$$
	where $\phi(t)=-1+\sqrt{1-e^{-2t}}$, for which $[\phi(t)]^2=2-2\sqrt{1-e^{-2t}}-e^{-2t}$.
	Finally, (\ref{Kgamma}) completes the proof.
\end{proof}

\vspace{.5cm}
All reproducing kernels
$K_{nn}^{\gamma}$ and $K_{jk}^{\gamma}$, with $1 \leq j,k \leq n-1$, can be expressed in terms of 
functions of the form 
$$(2yi)^j (2vi)^k \int_{\mathbb{R}_+} e^{it(z-\overline{w})-mt} t^{j+k+1} dt 
= (j+k+1)! \frac{(z-\overline{z})^j (w-\overline{w})^k}{[-i(z-\overline{w})+m]^{j+k+2}}.$$
Of course, this kind of integral appears in Theorem \ref{K-gamma-2}.
However, integrals corresponding to the reproducing kernels $K_{jn}^{\gamma}$ and $K_{nj}^{\gamma}$, 
with $1\leq j\leq n-1$, are given in terms of integrals of the form
\begin{equation*}
	\int_{0}^{\infty} t^m e^{-pt} (1-e^{-2t})^{1/2} dt =(-1)^m \frac{d^m}{dp^m} J(p).
\end{equation*}
This integral involves some special functions, and it suggests a further research for the reproducing kernel $K^{\gamma}(z,w)$.


\section{Algebra generated by a Toeplitz operator and $n$ orthogonal projections.}\label{sect_purespa}

In this section we study the $C^*$-algebra  $\mathcal{C}^*(T_{n,a_0},B_{\Pi,(1)},\dots,B_{\Pi,(n)})$ 
generated by the orthogonal projections $B_{\Pi,(1)},...,B_{\Pi,(n)}$ and
a Toeplitz operator $T_{n,a_0}: \mathcal{A}_n^2(\Pi) \rightarrow \mathcal{A}_n^2(\Pi)$ 
with vertical symbol $a_0$ to be defined below. 
This $C^*$-algebra is closely related to both the $C^*$-algebra $\mathcal{C}^*(P_T,B_{\Pi,(1)},\dots,B_{\Pi,(n)})$, and the $C^*$-algebra generated by Toeplitz operators acting on the poly-Bergman space, as we see at the end of this section.
We apply technique given in the Subsection \ref{subsect-alg-PT} to the study of the 
algebra under consideration.

Recall the orthogonal projection $Q_j $ given in (\ref{Qj}). This orthoprojection is unitarily equivalent to $B_{\Pi,(j)}$. We saw that
$$Q_j=R_n B_{\Pi,(j)} R_n^{*},$$
where $R_n$ is given in (\ref{mapA_Ln}).
Let $\alpha>0$ and 
$$a_0(z)=\chi_{[0,\alpha/2]}(y).$$ 
According to (\ref{gamma-n-a}), the  corresponding Toeplitz operator $T_{n,a_0}$ is unitarily equivalent to the operator $\gamma^{n,a_0}I$, acting on $(L_2(\mathbb{R_+}))^n$, where
\begin{equation*}
	\gamma^{n,a_0}(x)= \int_{0}^{\alpha x} N_n(y)[N_n(y)]^t dy,
\end{equation*}
and $N_n(y)=(\ell_0(y),\dots, \ell_{n-1}(y))^t$.

Let us consider the following commutative diagram:
$$
\xymatrix{ 
	\mathcal{A}_n^2(\Pi) \ar[d]_{R_n} \ar[rr]^{T_{n,a_0}, \ B_{\Pi,(j)}}  & &  \mathcal{A}_n^2(\Pi) \ar[d]^{R_n} \\ 
	(L_2(\mathbb{R}_+))^n \ar[rr]_{\gamma^{n,a_0}I,\ Q_j}  & & (L_2(\mathbb{R}_+))^n
}
$$

Recall that $Q_1,\dots, Q_n$ are identified with multiplication operators 
by matrix-valued functions defined on the positive real line. 
From this perspective, introduce the $C^*$-algebra
$$\mathcal{B}_n:=\mathcal{C}^*(\gamma^{n,a_0}, Q_1, \dots ,Q_{n}).$$
We will prove that $\mathcal{B}_n$ equals to the following  $C^*$-algebra
$$\mathcal{D}_n=\{ M\in M_n(\mathbb{C})\otimes C[0,\infty]\ :\ M(0),M(\infty) \text{ are diagonal} \}.$$

Each generator of $\mathcal{B}_n$ evaluated at the points $0, \infty$ is a diagonal matrix,
thus $\mathcal{B}_n \subset \mathcal{D}_n$. In order to prove that these $C^*$-algebras coincide, we will prove that $\mathcal{B}_n$ separates all the pure states of  $\mathcal{D}_n$. 
For $x_0 \in (0, \infty)$, we have the following pure states of $\mathcal{D}_n$:
$$f_{x_0,v}(M)=\langle M(x_0)v,v\rangle, \quad M \in \mathcal{D}_n,$$
with  $v \in \mathbb{C}^n$ a unit vector. 
For the boundary points $x_0=0,+\infty$, the pure states of $\mathcal{D}_n$ are given by
$$f_{x_0,e_j}(M)=\langle M(x_0)e_j,e_j \rangle, \quad  j=1,\dots,n,$$
where $\{e_1,\dots e_n\}$ is the canonical basis of $\mathbb{C}^n$.
There are no additional pure states of $\mathcal{D}_n$.

\begin{theorem}\label{C-Ta,B(j)}
	The $C^*$-algebra $\mathcal{C}^*(T_{n,a_0},B_{\Pi,(1)},\dots,B_{\Pi,(n)})$ 
	is isomorphic and isometric to the $C^*$-algebra $\mathcal{D}_n$.
	Equivalently, the $C^*$-algebra 
	$\mathcal{D}_n$ is generated by 
	\begin{equation*}\label{gamma-Qjs}
		\gamma^{n,a_0}, Q_1, \dots, Q_{n}.
	\end{equation*}
	Moreover, the map 
	$\mathcal{C}^*(T_{n,a_0},B_{\Pi,(1)},\dots,B_{\Pi,(n)}) \ni T\longmapsto R_n  TR_n^* \in \mathcal{D}_n$ 
	is an isometric isomorphism of $C^*$-algebras, where
	\begin{equation*}
		T_{n,a_0}\longmapsto \gamma^{n,a_0}I \quad \text{and} \quad B_{\Pi,(j)} \longmapsto Q_j.
	\end{equation*}
\end{theorem}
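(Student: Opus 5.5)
The plan is to reduce the statement to the identity $\mathcal{B}_n=\mathcal{D}_n$ and prove that identity with Kaplansky's theorem, as in Theorem \ref{isPT}. By Theorem \ref{Relation} and \eqref{Rn*Rn}--\eqref{RnRn*}, $R_n$ restricted to $\mathcal{A}_n^2(\Pi)$ is unitary onto $(L_2(\mathbb{R}_+))^n$. Hence $T\mapsto R_nTR_n^*$ is an isometric $*$-isomorphism from $\mathcal{C}^*(T_{n,a_0},B_{\Pi,(1)},\dots,B_{\Pi,(n)})$ onto $\mathcal{C}^*(\gamma^{n,a_0}I,Q_1,\dots,Q_n)$. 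A multiplication operator by a continuous bounded matrix function has norm equal to the sup norm, so the latter algebra is identified with $\mathcal{B}_n$. It therefore suffices to prove $\mathcal{B}_n=\mathcal{D}_n$.

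First I record the boundary behaviour of the generator. We have $\gamma^{n,a_0}(0)=0$, and $\gamma^{n,a_0}(\infty)=\int_0^\infty N_nN_n^t\,dy=I$ by orthonormality of the Laguerre functions. Each entry is continuous on $[0,\infty]$. This gives $\mathcal{B}_n\subset\mathcal{D}_n$, and $\mathcal{D}_n$ is postliminal as a subalgebra of $M_n(\mathbb{C})\otimes C[0,\infty]$.

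The pure states of $\mathcal{D}_n$ are the ones listed before the theorem: $f_{x_0,v}$ for $x_0\in(0,\infty)$, and $f_{0,e_j}$, $f_{\infty,e_j}$ for $j=1,\dots,n$. I separate them in three groups, mirroring Lemmas \ref{lem:sep-x0-x1-2}--\ref{lem:sep-extyx0-2}.

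\emph{Boundary states.} $Q_j$ separates $f_{x,e_j}$ from $f_{x',e_k}$ when $j\neq k$. The element $Q_j\gamma^{n,a_0}Q_j$ takes the value $0$ on $f_{0,e_j}$ and $1$ on $f_{\infty,e_j}$.

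\emph{Interior versus boundary.} The diagonal entry $\gamma_{jj}(x_0)=\int_0^{\alpha x_0}\ell_{j-1}^2\in(0,1)$ for $x_0\in(0,\infty)$. Choosing $j$ with $v_j\neq0$, the element $Q_j\gamma^{n,a_0}Q_j$ gives $f_{x_0,v}(Q_j\gamma^{n,a_0}Q_j)=\gamma_{jj}(x_0)|v_j|^2\in(0,1)$. Its value on every boundary state lies in $\{0,1\}$, so it separates $f_{x_0,v}$ from each boundary state.

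\emph{Distinct interior points.} For $x_0\neq x_1$ in $(0,\infty)$, I use that $\gamma_{11}(x)=1-e^{-\alpha x}$ is strictly increasing. I take the scalar function $\gamma_{11}$ placed on the diagonal. To see it lies in $\mathcal{B}_n$, I would build it from $\sum_k Q_k\,\Phi\,Q_k$ where $\Phi$ is a function with $\Phi_{kk}=\gamma_{11}$. Alternatively, I argue as in Lemma \ref{lem:sep-x0-x1-2}: equality of $f_{x_0,v}$ and $f_{x_1,w}$ on all $Q_j\gamma Q_k$ and $Q_j\gamma Q_k\gamma Q_j$ forces $\gamma_{jj}(x_0)\gamma_{kk}$-type identities. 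That argument is messier here because $\gamma$ is not rank one. The cleanest route I would try is the following. The restriction of $\mathcal{B}_n$ to $[0,\infty]$ commutes with $C[0,\infty]$ only through its center. Showing that $\mathcal{B}_n$ contains $h(x)I$ for a separating scalar $h$ amounts to showing that the central elements $\sum_k Q_k\gamma^{m}Q_k$-type combinations separate points. As a fallback, I would show directly that $f_{x_0,v}=f_{x_1,w}$ on $\mathcal{B}_n$ forces $x_0=x_1$, by comparing the spectra of the compressions $Q_j\gamma Q_j$, which have value $\gamma_{jj}(x)$, against $\gamma_{11}$ on the support of $v$.

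\emph{Same interior point.} For $x_0=x_1$ and $v\neq\lambda w$, it suffices that the evaluation $\pi_{x_0}(\mathcal{B}_n)$ is all of $M_n(\mathbb{C})$. This subalgebra contains the matrix units $E_{jj}$ and the positive matrix $\gamma(x_0)$. It equals $M_n(\mathbb{C})$ iff the graph on $\{1,\dots,n\}$ with an edge $j\sim k$ whenever $\gamma_{jk}(x_0)\neq0$ is connected. I expect this connectivity to be the main obstacle, since individual off-diagonal entries $\int_0^{\alpha x_0}\ell_{j-1}\ell_{k-1}$ may vanish at isolated points.

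My first attempt is by contradiction. Suppose $\gamma(x_0)$ is block diagonal for a nontrivial partition $S\cup S^c$. Then $\mathrm{span}\{\ell_{j-1}\}_{j\in S}\perp\mathrm{span}\{\ell_{k-1}\}_{k\in S^c}$ in $L_2(0,s)$ with $s=\alpha x_0$. By global orthonormality the same holds in $L_2(s,\infty)$. This says the projection $\chi_{[0,s]}$ commutes with the orthogonal projection onto $\mathrm{span}\{\ell_{j-1}\}_{j\in S}$ inside $L_n^\oplus$. Writing $\ell_k=e^{-y/2}p_k(y)$ with $\deg p_k=k$, I would use the top-degree index in $S$ or $S^c$ and a degree or asymptotic argument to derive a contradiction. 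Failing that, I would try a moment argument in the variable $s$, since both orthogonality relations hold for one specific $s>0$ together with real-analyticity.
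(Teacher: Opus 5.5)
Your overall strategy is the paper's: reduce via the unitary $R_n$ to $\mathcal{B}_n=\mathcal{D}_n$, then apply Kaplansky's theorem. Your treatment of boundary states, and of interior versus boundary states, matches Lemmas \ref{lem:sep-ext} and \ref{lem:sep-extyx0}. The gap is that the interior--interior case, which carries the real content, is left as a list of possible routes rather than an argument.

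For $x_0\neq x_1$ the fix is short. Suppose $f_{x_0,v}=f_{x_1,w}$ on $\mathcal{B}_n$. Evaluating on $Q_j$ gives $|v_j|=|w_j|$. Evaluating on $Q_j\gamma^{n,a_0}Q_j$ for some $j$ with $v_j\neq0$ gives $\gamma^{n,a_0}_{jj}(x_0)=\gamma^{n,a_0}_{jj}(x_1)$. Since $x\mapsto\int_0^{\alpha x}\ell_{j-1}^2\,dy$ is strictly increasing, $x_0=x_1$. Using $\gamma_{11}$ alone fails when $v_1=0$. Obtaining $\gamma_{11}I$ from $\sum_k Q_k\Phi Q_k$ is circular, because you have no $\Phi\in\mathcal{B}_n$ whose diagonal is identically $\gamma_{11}$.

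For $x_0=x_1$, your reduction to connectedness of the graph of nonzero entries of $\gamma^{n,a_0}(x_0)$ is correct, and your worry is justified. The paper's proof of Lemma \ref{lem:sep-x0-x1} asserts $\gamma^{n,a_0}_{jk}(x)\neq0$ on $(0,\infty)$ and divides by it. But with $s=\alpha x$ one has $\gamma^{n,a_0}_{13}(x)=\tfrac12 se^{-s}(2-s)$, which vanishes at $x=2/\alpha$. However, you never prove connectivity: the degree, asymptotic and moment ideas are not carried out, and real-analyticity in $s$ says nothing at a fixed $s$.

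The missing ingredient is the Wronskian identity from the Laguerre equation $(ye^{-y}L_m')'=-m\,e^{-y}L_m$, where $\ell_m=(-1)^m e^{-y/2}L_m$:
\begin{equation*}
\int_0^s e^{-y}L_m(y)L_k(y)\,dy=\frac{se^{-s}}{k-m}\bigl(L_m'(s)L_k(s)-L_m(s)L_k'(s)\bigr),\qquad m\neq k.
\end{equation*}
It gives $\gamma^{n,a_0}_{12}(x)=-se^{-s}\neq0$. For $k\ge3$, $\gamma^{n,a_0}_{1k}(x)$ is a nonzero multiple of $L_{k-1}'(s)$. If it vanishes, then $\gamma^{n,a_0}_{2k}(x)=\pm\frac{se^{-s}}{k-2}L_{k-1}(s)\neq0$, because Laguerre polynomials have only simple zeros. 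So every vertex is joined to $1$ or $2$, and the graph is connected for every $s>0$. Hence $\pi_{x_0}(\mathcal{B}_n)=M_n(\mathbb{C})$, and $f_{x_0,v}=f_{x_0,w}$ on $\mathcal{B}_n$ forces $vv^*=ww^*$, i.e.\ $v=\lambda w$ with $|\lambda|=1$. With these two steps supplied, your argument proves the theorem and also repairs the unjustified division in the paper's lemma.
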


\begin{proof} Of course $\mathcal{D}_n$ is a liminal $C^*$-algebra. 
	The $C^*$-algebra $\mathcal{B}_n$  separates all the pure states of $\mathcal{D}_n$ as shown in 
	Lemmas \ref{lem:sep-x0-x1}, \ref{lem:sep-ext} and \ref{lem:sep-extyx0} below. 
	By \textit{the noncommutative Stone-Weierstrass theorem}, 
	we have that $\mathcal{D}_n=\mathcal{B}_n$.
\end{proof}

\begin{lemma}\label{lem:sep-x0-x1}
	Let $v, w \in \mathbb{C}^n$ be unit vectors,  and $x_0, x_1 \in (0, \infty)$. 
	Let $\gamma^{n,a_0}$ be the spectral function of the Toeplitz operator $T_{n,a_0}$, where $a_0(z)=\chi_{[0,\alpha/2]}(y)$.
	Suppose that
	\begin{align}
		f_{x_0,v}(Q_j\gamma^{n,a_0}I Q_k)& =f_{x_1,w}(Q_j\gamma^{n,a_0}I Q_k), \label{fQjQk}   \\
		f_{x_0,v}(Q_j\gamma^{n,a_0}I Q_k\gamma^{n,a_0}I Q_l)&=f_{x_1,w}(Q_j\gamma^{n,a_0}I Q_k\gamma^{n,a_0}I Q_l), \label{fQjQkQl}
	\end{align}
	for all $j,k,l=1,...,n.$
	Then $x_0=x_1$ and $v=\lambda w$, where $\lambda$ is a unimodular complex number.
\end{lemma}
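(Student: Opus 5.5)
The plan is to evaluate the hypotheses \eqref{fQjQk} and \eqref{fQjQkQl} entrywise, exactly as in the proof of Lemma \ref{lem:sep-x0-x1-2}. Write $G(x)=\gamma^{n,a_0}(x)$, so that
\[
G_{jk}(x)=\int_0^{\alpha x}\ell_{j-1}(y)\ell_{k-1}(y)\,dy .
\]
Since $Q_j$ is the matrix unit $E_{jj}$, hypothesis \eqref{fQjQk} reads
\[
G_{jk}(x_0)\,v_k\overline{v_j}=G_{jk}(x_1)\,w_k\overline{w_j}.
\]
Hypothesis \eqref{fQjQkQl} reads
\[
G_{jk}(x_0)G_{kl}(x_0)\,v_l\overline{v_j}=G_{jk}(x_1)G_{kl}(x_1)\,w_l\overline{w_j}.
\]

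\emph{Diagonal entries and the point.} For $j=k$ the first identity gives $G_{jj}(x_0)|v_j|^2=G_{jj}(x_1)|w_j|^2$. Here $G_{jj}(x)=\int_0^{\alpha x}\ell_{j-1}^2\,dy$ is strictly positive on $(0,\infty)$. It is also strictly increasing, because $\ell_{j-1}^2>0$ almost everywhere (a Laguerre function has only finitely many zeros). Hence $v_j\neq 0$ if and only if $w_j\neq0$. Choose $j$ with $v_j\ne 0$ and take $k=l=j$ in the second identity, which gives $G_{jj}(x_0)^2|v_j|^2=G_{jj}(x_1)^2|w_j|^2$. Dividing by the first identity yields $G_{jj}(x_0)=G_{jj}(x_1)$, and strict monotonicity then forces $x_0=x_1$. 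Returning to the diagonal identity gives $|v_j|=|w_j|$ for every $j$.

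\emph{Phases.} Set $x:=x_0=x_1$. The identities now read $G_{jk}(x)\bigl(v_k\overline{v_j}-w_k\overline{w_j}\bigr)=0$ and $G_{jk}(x)G_{kl}(x)\bigl(v_l\overline{v_j}-w_l\overline{w_j}\bigr)=0$. Whenever $G_{jl}(x)\neq0$, or $G_{jk}(x)G_{kl}(x)\ne0$ for some $k$, we obtain $v_l\overline{v_j}=w_l\overline{w_j}$. Fix $j_0$ with $v_{j_0}\neq0$ and put $\lambda=\overline{w_{j_0}}/\overline{v_{j_0}}$, which is unimodular since $|v_{j_0}|=|w_{j_0}|$. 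Call indices $j,l$ adjacent when $G_{jl}(x)\neq0$. If $l$ can be reached from $j_0$ through adjacent indices carrying nonzero coordinates, then propagating the relation $v_l\overline{v_j}=w_l\overline{w_j}$ along the chain gives $v_l=\lambda w_l$. Indices with $v_l=0$ automatically satisfy $w_l=0$, so they impose nothing.

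\emph{Main obstacle.} The delicate step is that some off-diagonal entry $G_{jl}(x)$ may vanish at the particular point $x$, breaking the chain. My first attempt is to use the two-step relations above: $(G^2)_{jl}=\sum_k G_{jk}G_{kl}$, and each nonvanishing summand $G_{jk}G_{kl}$ already gives the phase relation directly. So the only thing needed is that the graph of nonzero entries of $G(x)$ is connected, i.e.\ that $G(x)$ is not block diagonal for any nontrivial partition of $\{1,\dots,n\}$. To establish this I would use $I-G(x)=\int_{\alpha x}^\infty N_nN_n^t\,dy$ together with explicit low-order information. For instance, $G_{12}(x)=\int_0^{\alpha x}e^{-y}(1-y)\,dy=\alpha x\,e^{-\alpha x}\neq0$, and more generally an explicit expression for the entries $G_{1k}(x)$, obtained by expanding the Laguerre polynomials, should show they do not vanish. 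If that fails at isolated points, the fallback is to note that such points are discrete and handle them by the $C^*$-algebra structure. Specifically, $\mathcal{B}_n$ contains $Q_jp(G)Q_l$ for every polynomial $p$, so $f_{x,v}$ and $f_{x,w}$ agree on all of them. If they differ, $v$ and $w$ define distinct pure states of the irreducible evaluation at $x$, and it then suffices to know that $G(x)$ together with the $Q_j$ generates all of $M_n(\mathbb{C})$.
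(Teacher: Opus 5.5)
Your recovery of $x_0=x_1$ and $|v_j|=|w_j|$ from the diagonal entries is correct and coincides with the paper. The gap is in the phase step, and you located it accurately. The paper closes that step only by asserting $\gamma^{n,a_0}_{jk}(x)\neq0$ for all $j,k$ and all $x\in(0,\infty)$, and that assertion is false for $n\ge3$. Write $\ell_m(y)=(-1)^me^{-y/2}L_m(y)$ with $L_m$ the Laguerre polynomials. From $(ye^{-y}L_m')'=-me^{-y}L_m$ one gets, for $m\neq p$, $\int_0^Xe^{-y}L_mL_p\,dy=Xe^{-X}\bigl(L_m'L_p-L_p'L_m\bigr)(X)/(p-m)$. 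Hence $G_{13}(x)=\tfrac12Xe^{-X}(2-X)$ with $X=\alpha x$, which vanishes at $x=2/\alpha$. More generally, $G_{1k}$ is a multiple of $Xe^{-X}L_{k-1}'(X)$, which has $k-2$ zeros in $(0,\infty)$. So your primary plan for the obstacle, showing that the entries $G_{1k}(x)$ never vanish, fails.

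Your reduction also targets the wrong property. The hypotheses contain at most two factors of $G$. They therefore give $v_l\overline{v_j}=w_l\overline{w_j}$ only when $j,l$ are joined by a path of length at most $2$ in the graph of nonzero entries; longer chains need intermediate indices lying in the support of $v$. Taking $v=(e_j+e_l)/\sqrt2$, $w=(e_j-e_l)/\sqrt2$ and $x_0=x_1=x$ shows the lemma is \emph{equivalent} to the following: for every $x\in(0,\infty)$ and every $j\neq l$, either $G_{jl}(x)\neq0$ or $G_{jk}(x)G_{kl}(x)\neq0$ for some $k$. Connectivity alone does not give this.

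Your fallback drops the lemma's hypotheses, since it uses all elements $Q_jp(G)Q_l$. At best it yields the separation needed for Theorem~\ref{C-Ta,B(j)}, not the lemma itself. It also still rests on connectivity, which you have not established. The missing ingredient for connectivity is the confluent Christoffel--Darboux identity $\sum_{i=0}^{m}L_i^2=(m+1)(L_m'L_{m+1}-L_{m+1}'L_m)$. It gives $|G_{j,j+1}(x)|=\frac{Xe^{-X}}{j}\sum_{i=0}^{j-1}L_i(X)^2>0$. Consequences:
the edges $\{j,j+1\}$ are therefore always present; $G(x)$ and the $Q_j$ generate $M_n(\mathbb{C})$; and the words $Q_jGQ_{j+1}G\cdots GQ_l\in\mathcal{B}_n$ separate $f_{x,v}$ from $f_{x,w}$. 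This also proves the lemma verbatim for $n\le3$. For general $n$, pairs with $|j-l|\ge3$ still require ruling out simultaneous zeros of several of these Wronskian-type expressions. Neither your proposal nor the paper does this.
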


\begin{proof}
	For the sake of simplicity, $a$ denotes the symbol $a_0$ along the proof.
	Elementary calculations show that $Q_j\gamma^{n,a} I Q_k=\gamma_{jk}^{n,a}E_{jk}$ and $E_{jk}E_{kl}=E_{jl}$.
	From (\ref{fQjQk}) and (\ref{fQjQkQl}) we obtain
	\begin{align*}
		\gamma_{jk}^{n,a}(x_0)v_k\overline{v_j}&=\gamma_{jk}^{n,a}(x_1)w_k\overline{w_j}, \label{gammaTP1} \\
		\gamma_{jk}^{n,a}(x_0)\gamma_{kl}^{n,a}(x_0)v_l\overline{v_j}
		&=\gamma_{jk}^{n,a}(x_1)\gamma_{kl}^{n,a}(x_1)w_l\overline{w_j} 
	\end{align*}
	for all $j,k,l=1,\dots,n.$ 
	Note that
	\begin{align*}
		\gamma_{jk}^{n,a}(x)&=\int_{0}^{\alpha x}\ell_{j-1}(y)\ell_{k-1}(y)dy \neq 0 \ \ \forall x\in (0,\infty).
	\end{align*}
	Thus, $v_k \overline{v_j}\neq 0$ if and only if $w_k \overline{w_j}\neq 0$.
	In particular, $v_j\neq 0$ if and only if $w_j\neq 0$.
	Therefore
	\begin{equation}\label{razon}
		\frac{\gamma_{jk}^{n,a}(x_0)}{\gamma_{jk}^{n,a}(x_1)}=\frac{w_k\overline{w_j}}{v_k\overline{v_j}}, 
	\end{equation}
	
	\begin{equation}\label{razon2}
		\frac{\gamma_{jk}^{n,a}(x_0)}{\gamma_{jk}^{n,a}(x_1)}\cdot
		\frac{ \gamma_{kl}^{n,a}(x_0)}{\gamma_{kl}^{n,a}(x_1)}
		=\frac{w_l\overline{w_j}}{v_l\overline{v_j}}
	\end{equation}
	whenever $v_k\overline{v_j}\neq 0$ and  $v_l\overline{v_j}\neq 0$. Since $v \neq 0$, there exists $k \in \{1,\dots,n\}$ such that $v_k\neq 0$.
	Then, using (\ref{razon}) and (\ref{razon2}) with $j=l=k$ we get that
	\begin{equation*}
		\frac{w_k\overline{w_k}}{v_k\overline{v_k}}
		\frac{w_k\overline{w_k}}{v_k\overline{v_k}}
		=\frac{w_k\overline{w_k}}{v_k\overline{v_k}}
	\end{equation*}
	or $|w_k|^2=|v_k|^2.$
	Using last result and (\ref{razon}) with $j=k$ we obtain $\gamma_{kk}^{n,a}(x_0)=\gamma_{kk}^{n,a}(x_1)$. That is,
	$$\int_0^{x_0 \alpha} (\ell_{k-1}(y))^2dy=\int_0^{x_1 \alpha} (\ell_{k-1}(y))^2dy,$$
	which implies that $x_0=x_1$.
	Consequently, from (\ref{razon}) we find that
	$v_k\overline{v_j}=w_k\overline{w_j}$
	for all $j$. Thus, if $v_j\neq 0$, then
	$$\frac{v_k}{w_k}=\frac{\overline{w_j}}{\overline{v_j}}.$$
	As in the proof of Lemma \ref{lem:sep-x0-x1-2}, $\lambda:= \frac{\overline{w}_j}{\overline{v}_j}$ does not depend on $j$. As a consequence,	$v=\lambda w$ with $|\lambda|=1.$
\end{proof}

\vspace*{.6cm}
We proceed to separate the rest of the pure states of  $\mathcal{D}_n$.

\begin{lemma}\label{lem:sep-ext}  
	Let $\gamma^{n,a_0}$ be the spectral function of the Toeplitz operator $T_{n,a_0}$, where $a_0(z)=\chi_{[0,\alpha/2]}(y)$. 
	Then 
	\begin{equation*}
		f_{0,e_j} (\gamma^{n,a_0}I) \neq f_{\infty,e_k}(\gamma^{n,a_0}I) \quad \text{ for all} \quad j,k=1,\dots,n.
	\end{equation*}
	In addition, the pure states $f_{x,e_1},\dots,f_{x,e_n}$ are separated by $Q_1,\dots,Q_n$, where $x=0, +\infty$.
\end{lemma}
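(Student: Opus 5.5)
We compute the boundary values of $\gamma^{n,a_0}$ directly and then evaluate the pure states on the relevant generators. Since
$$\gamma^{n,a_0}(x)=\int_0^{\alpha x} N_n(y)[N_n(y)]^t\,dy,$$
letting $x\to 0^+$ gives $\gamma^{n,a_0}(0)=0$, the zero matrix. Letting $x\to+\infty$, the orthonormality of the Laguerre functions $\ell_0,\dots,\ell_{n-1}$ in $L_2(\mathbb{R}_+)$ gives
$$\gamma^{n,a_0}(\infty)=\int_0^{\infty} N_n(y)[N_n(y)]^t\,dy=I.$$
This is also a special case of the boundary relations $a^{+\infty}I=\lim_{x\to0^+}\gamma^{n,a}(x)$ and $a^0 I=\lim_{x\to+\infty}\gamma^{n,a}(x)$ stated before Theorem \ref{CAToeplitz1}, applied with $a=a_0$, for which $a_0^{0}=1$ and $a_0^{+\infty}=0$. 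In particular, both endpoint values are diagonal, which is consistent with $\mathcal{B}_n\subset\mathcal{D}_n$.

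For the first assertion, evaluate the states on $\gamma^{n,a_0}I$:
$$f_{0,e_j}(\gamma^{n,a_0}I)=\langle 0\,e_j,e_j\rangle=0,\qquad f_{\infty,e_k}(\gamma^{n,a_0}I)=\langle I e_k,e_k\rangle=1.$$
Hence $f_{0,e_j}(\gamma^{n,a_0}I)\neq f_{\infty,e_k}(\gamma^{n,a_0}I)$ for all $j,k=1,\dots,n$.

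For the second assertion, fix $x\in\{0,+\infty\}$. Since $Q_j$ is the constant matrix $E_{jj}$, we have
$$f_{x,e_k}(Q_j)=\langle E_{jj}e_k,e_k\rangle=\delta_{jk}.$$
So for $j\neq k$ the element $Q_j$ takes the value $1$ at $f_{x,e_j}$ and the value $0$ at $f_{x,e_k}$, and therefore separates these two states. This is the same argument as in the first paragraph of the proof of Lemma \ref{lem:sep-extyx0-2}.

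The only step that needs any care is justifying $\gamma^{n,a_0}(\infty)=I$. It follows from dominated convergence, since the integrand $N_n N_n^t$ is integrable on $\mathbb{R}_+$, together with the orthonormality of $\{\ell_k\}$; alternatively one can simply invoke the boundary relations quoted above. Everything else is a direct evaluation.
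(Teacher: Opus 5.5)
Your proposal is correct and matches the paper's proof: both use $\gamma^{n,a_0}(0)=0$ and $\gamma^{n,a_0}(\infty)=I$ to get the state values $0$ and $1$, and $f_{x,e_k}(Q_j)=\delta_{jk}$ for the second claim. You also justify the endpoint values, via dominated convergence and orthonormality of the $\ell_k$, or via the boundary relations preceding Theorem~\ref{CAToeplitz1}, whereas the paper only recalls them.
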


\begin{proof}   
	Recall that  $\gamma^{n,a_0}(0)=0I$ and  $\gamma^{n,a_0}(\infty)=I$. Then
	$$f_{0,e_j}(\gamma^{n,a_0}I)=0 \quad
	\text{and} \quad
	f_{\infty,e_k}(\gamma^{n,a_0}I)=1.$$
	That is, the pure states $f_{0,e_j}$ and $f_{\infty,e_k}$ are separated by $\mathcal{B}_n$ for $j,k \in \{1,\ldots, n\}$.
	
	Let $x=0, +\infty$. Note that $f_{x,e_k}(Q_j)=\delta_{jk}$, thus $f_{x,e_j}(Q_j)\neq f_{x,e_k}(Q_j)$ for $j\neq k$.
\end{proof}

\begin{lemma}\label{lem:sep-extyx0} 
	Let $v=(v_1,\dots,v_n)^t \in \mathbb{C}^n$ be a unit vector,  $x_0 \in (0,\infty)$ and $x_1\in \{0, +\infty\}$ .
	Let $\gamma^{n,a_0}$ be the spectral function of the Toeplitz operator $T_{n,a_0}$, where $a_0(z)=\chi_{[0,\alpha/2]}(y)$. 
	If $v_j\neq 0$, then
	\begin{equation*}
		f_{x_0,v} (Q_j\gamma^{n,a_0}IQ_j) \neq f_{x_1,e_k}(Q_j\gamma^{n,a_0}IQ_j)
		\quad \text{ for all} \quad k=1,\dots,n.
	\end{equation*}
\end{lemma}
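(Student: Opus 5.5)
The plan is to compute both sides explicitly and show that the left-hand side lies strictly inside $(0,1)$, while the right-hand side can only be $0$ or $1$. The argument parallels the proof of Lemma \ref{lem:sep-ext-2}.

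We use $Q_j\gamma^{n,a_0}IQ_j=\gamma^{n,a_0}_{jj}E_{jj}$, as observed in the proof of Lemma \ref{lem:sep-x0-x1}. This gives
$$f_{x_0,v}(Q_j\gamma^{n,a_0}IQ_j)=\gamma^{n,a_0}_{jj}(x_0)\,|v_j|^2,\qquad \gamma^{n,a_0}_{jj}(x_0)=\int_0^{\alpha x_0}(\ell_{j-1}(y))^2\,dy .$$
At the boundary points we use $\gamma^{n,a_0}(0)=0\,I$ and $\gamma^{n,a_0}(\infty)=I$, which were already used in Lemma \ref{lem:sep-ext}. The first holds because the integration interval collapses. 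The second holds because the normalized Laguerre functions $\ell_0,\dots,\ell_{n-1}$ are orthonormal in $L_2(\mathbb{R}_+)$. Hence
$$f_{0,e_k}(Q_j\gamma^{n,a_0}IQ_j)=0,\qquad f_{\infty,e_k}(Q_j\gamma^{n,a_0}IQ_j)=\delta_{jk},$$
so the right-hand side belongs to $\{0,1\}$ for every $k$.

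It remains to check that $\gamma^{n,a_0}_{jj}(x_0)\in(0,1)$ for $x_0\in(0,\infty)$. The integrand $(\ell_{j-1})^2$ is nonnegative, has total integral $1$ over $\mathbb{R}_+$, and vanishes only at finitely many points, namely the zeros of a Laguerre polynomial. So the integral over $[0,\alpha x_0]$ is strictly positive, and the integral over the complementary set $(\alpha x_0,\infty)$ is also strictly positive, which gives a value strictly less than $1$.

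Since $v$ is a unit vector with $v_j\neq0$, we have $0<|v_j|^2\le1$. Therefore $f_{x_0,v}(Q_j\gamma^{n,a_0}IQ_j)\in(0,1)$, and this value differs from both $0$ and $1$, as required. The only point needing care is the strictness of the two inequalities for $\gamma^{n,a_0}_{jj}(x_0)$, and that follows directly from the orthonormality of the $\ell_k$ and the fact that $\ell_{j-1}$ has finitely many zeros.
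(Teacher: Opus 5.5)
Your proof is correct and follows the paper's argument: it computes $f_{0,e_k}(Q_j\gamma^{n,a_0}IQ_j)=0$ and $f_{\infty,e_k}(Q_j\gamma^{n,a_0}IQ_j)=\delta_{jk}$, and shows that $\gamma^{n,a_0}_{jj}(x_0)\,|v_j|^2\in(0,1)$. Your justification of $\gamma^{n,a_0}_{jj}(x_0)\in(0,1)$ supplies a detail the paper only asserts. It uses the orthonormality of the $\ell_k$ and the fact that $\ell_{j-1}$ has finitely many zeros.
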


\begin{proof}  
	Note that $f_{0,e_k}(Q_j\gamma^{n,a_0}IQ_j)=0$ and 
	$f_{\infty,e_k}(Q_j\gamma^{n,a_0}I Q_j)=\delta_{jk}.$
	Choose $j$ such that $v_j \neq 0$.  Since $\gamma_{jj}^{n,a_0}(x_0) \in (0,1)$,
	\begin{align*}
		f_{x_0,v}(Q_j\gamma^{n,a_0}I Q_j)&=\gamma_{jj}^{n,a_0}(x_0)|v_j|^2 \in (0,1),
	\end{align*}
	which means that $f_{x_0,v}$ and $f_{x_1,e_k}$ are separated by $\mathcal{B}_n$  for all $k=1,\dots,n.$
\end{proof} 

\vspace{.5cm}

By bringing together all of our main results, we
reveal a relationship between $C^*$-algebras generated by Toeplitz operators with vertical symbols and 
$C^*$-algebras generated by orthogonal projections acting on the poly-Bergman space $\mathcal{A}_n^2(\Pi)$.
Recall that $T_{n,a_0}: \mathcal{A}_n^2(\Pi) \rightarrow \mathcal{A}_n^2(\Pi)$  is the Toeplitz operator with vertical symbol $a_0(z)=\chi_{[0,\alpha/2]}(y)$,
where $\alpha>0$ is fixed. On the other hand, the orthogonal projection $P:  \mathcal{A}_n^2(\Pi) \rightarrow \mathcal{A}_n^2(\Pi)$ was constructed via an orthoprojection given in terms 
of Toeplitz operators acting on the Bergman space $\mathcal{A}^2(\Pi)$; see (\ref{vertical-aj}), (\ref{PT}), and (\ref{def-P}).

According to Corollary \ref{CAToeplitz} and Theorem \ref{C-P-Bj}, the $C^*$-algebra generated by all Toeplitz operators $T_{n,b}$, with vertical symbols satisfying $b^0=b^{+\infty}$, is 
contained in the $C^*$-algebra generated by the orthogonal projections
$P$, $B_{\Pi,(1)},..., B_{\Pi,(n)}$. In turn, this $C^*$-algebra is a $C^*$-subalgebra of 
$C^*(T_{n,a_0}, B_{\Pi,(1)},..., B_{\Pi,(n)})$, as follows from Theorem \ref{C-Ta,B(j)}.
The following diagram summarizes these relationships:
$$
\xymatrix{ 
	{\mathcal{C}}^*({\mathcal{T}}^{0,0}_n) \  \ar[d]^{\cong}  \subset & {\mathcal{C}}^*(P;\{B_{\Pi,(j)}\}_{j=1}^n) \ \subset \ar[d]^{\cong} & {\mathcal{C}}^*(T_{n,a_0}; \{B_{\Pi,(j)}\}_{j=1}^n) \ar[d]^{\cong} \\ 
	{\mathcal{D}}^{\mathbb{C}I}_n  & {\mathcal{D}}^{1,n}_n  & {\mathcal{D}}_n \\
}
$$

In addition,  the $C^*$-algebra $\mathcal{C}^* ({\mathcal{T}}^{0,\infty}_n)$ generated by all Toeplitz operators $T_{n,a}$, with $a \in  L_\infty^{\{0,\infty\}}(\mathbb{R}_+)$, is contained in ${\mathcal{C}}^*(T_{n,a_0};\{B_{\Pi,(j)}\}_{j=1}^n)$,
see Theorem \ref{CAToeplitz1}. 
Furthermore, for $a \in  L_\infty^{\{0,\infty\}}(\mathbb{R}_+)$, the vertical symbol $b_a(z)=a(z)+(a^{0}-a^{+\infty}) a_0(z)$
satisfies $b_a^{0}=b_a^{+\infty}$. Thus, we have 
$$\mathcal{C}^* ({\mathcal{T}}^{0,\infty}_n) = \mathcal{C}^* ({\mathcal{T}}^{0,0}_n)+\mathbb{C}\, T_{n,a_0}.$$
All the $C^*$-algebras here differ from each other just by
their irreducible one-dimensional representations corresponding to  $x=0,+\infty$. 
In this sense, the $C^*$-algebra generated by all Toeplitz operators with symbols in $L_\infty^{\{0,\infty\}}(\mathbb{R}_+)$ 
is closely related  to the $C^*$-algebra generated by a system of all-but-one orthogonal projections in generic position.


\end{document}